\newcommand{\BD}{\mbox{\rm bd}}
\newtheorem{theorem}{Theorem}
\newtheorem{lemma}[theorem]{Lemma}
\newtheorem{proposition}[theorem]{Proposition}
\newtheorem{fact}[theorem]{Fact}
\theoremstyle{definition}
\begin{document}


\title{The complete characterization of tangram pentagons}

\author{Sarah Sophie Pohl}
\address{Ringau 40, 37327 Leinefelde, Germany}
\email{sarah.sophie.pohl@gmail.com}
\author{Christian Richter}
\address{Friedrich Schiller University, Institute for Mathematics, 07737 Jena, Germany}
\email{christian.richter@uni-jena.de}

\date{\today}

\begin{abstract}
The old Chinese puzzle tangram gives rise to serious mathematical problems when one asks for all tangram figures that satisfy particular geometric properties. All $13$ convex tangram figures are known since 1942. They include the only triangular and all six quadrangular tangram figures. The families of all $n$-gonal tangram figures with $n \ge 6$ are either infinite or empty.
Here we characterize all $53$ pentagonal tangram figures, including $51$ non-convex pentagons and $31$ pentagons whose vertices are not contained in the same orthogonal lattice. 
\end{abstract}
\subjclass[2010]{52C20 (primary); 00A08, 05B45, 51M04.}
\keywords{tangram, dissection, tiling, pentagon, lattice.}

\dedicatory{To our teachers and friends Prof.\ Dr.\ Johannes B\"ohm on the occasion of his 95th birthday, Prof.\ Dr.\ Eike Hertel on the occasion of his 80th birthday and Dr.\ Carsten M\"uller for 30 years of service as the school master of the Carl Zeiss Gymnasium Jena.}

\maketitle


\section{Introduction}

The tangram, known as a Chinese puzzle \cite{goodrich1817}, is a collection of seven polygons, called \emph{tans}: five isosceles right triangles, two with legs of length $1$, one with $\sqrt{2}$ and two with $2$, a square with sides of length $1$ and a parallelogram with sides of length $1$ and $\sqrt{2}$ and an angle of $\frac{\pi}{4}$. These seven pieces are arranged, using Euclidean isometries, to form dissections of prescribed or unknown polygons, as in Figure~\ref{fig:pieces}.
\begin{figure}
\begin{center}
\begin{tikzpicture}[xscale=.8, yscale=.8]
\draw[densely dotted] 
  (-1,-2)--(-1,2) (0,-2)--(0,2) (1,-2)--(1,2) (2,-2)--(2,2) (-1,-2)--(2,-2) (-1,-1)--(2,-1) (-1,0)--(2,0) (-1,1)--(2,1) (-1,2)--(2,2)
  ;
\draw[thick]
  (-1,-2)--(0,-2)--(2,0)--(0,2)--(-1,2)--cycle
	(0,2)--(0,-2)
	(-1,1)--(0,1)--(-1,0)--(0,-1)--(0,-2)--(-1,-1)
	(0,0)--(2,0)
	;

\draw[densely dotted] 
  (4,-2)--(6,-2) (4,-1)--(5,-1) (4,0)--(5,0) (4,1)--(6,1) (4,2)--(6,2) (4,-2)--(4,2) (5,-2)--(5,2)
	(6,-.586)--(8.121,1.535) (6,.828)--(7.414,2.242) (6,-.586)--(6.707,-1.297) (6.707,1.535)--(8.121,.121) (7.414,2.242)--(8.828,.828)
  ;
\draw[thick]
  (4,0)--(6,-2)--(8.828,.828)--(6,.828)--(6,2)--cycle
	(6,.828)--(7.414,-.586)
	(5,0)--(6,1)--(6,-1)--(5,-1)--(5,0)--(6,0)
	(6,-2)--(6,-1)
	(5,0)--(5,1)
	;
\end{tikzpicture}
\end{center}
\caption{
The seven tans along with their lattices, dissecting  a convex and a non-convex pentagon.\label{fig:pieces}}
\end{figure}
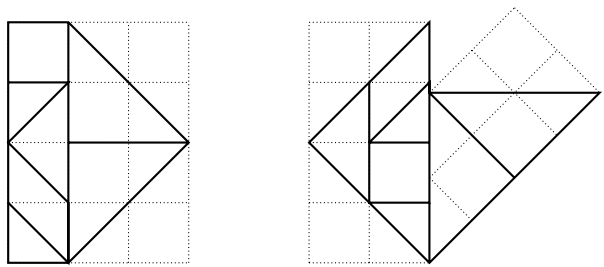
A \emph{dissection (or tiling) of a polygon $P$ into pieces (or tiles) $P_1,\ldots,P_k$} is given if $P$ is the union of all pieces $P_i$, $1 \le i \le k$, and if no two pieces have interior points in common. A polygon $T$ is called a \emph{tangram} if $T$ can be dissected into (isometric images of) the seven tans.

Tangram puzzles usually ask to find dissections of prescribed polygons \cite{elffers,goodrich1817,read1985,slocum}. We do not address aspects of the tangram related to craft, art and design. Fruitful mathematical problems appear when one aims to detect systematically all tangrams satisfying particular geometric properties. Such questions have been posed and studied in several books (e.g.\ \cite{elffers, mueller2020, read1985, slocum}, \cite[Chapter 7]{mueller2013}), papers in mathematical journals (e.g.\ \cite{gardner1974,graber2016, heinert1998, heinert1998a, read2004, wang}), private publications \cite{mueller2007} and contributions to mathematical competitions \cite{brunner2014,brunner2015,heinert1996,pohl2018,pohl2019,pohl2020}. The most prominent result of that kind is the following one by Wang and Hsiung (cf.\ Figure~\ref{fig:convex}).

\begin{theorem}[\cite{wang}]\label{thm:convex}
There exist, up to isometry, exactly $13$ convex tangrams: one triangle, six quadrangles, two pentagons and four hexagons.
\end{theorem}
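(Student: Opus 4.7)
The plan is to exploit the very rigid geometry of the tans. I would proceed in four main steps. First, I note that every tan has its interior angles in $\{\pi/4,\pi/2,3\pi/4\}$, so at each vertex of a convex tangram $T$ the interior angle is a multiple of $\pi/4$ strictly less than $\pi$, hence in $\{\pi/4,\pi/2,3\pi/4\}$. Writing $n_1,n_2,n_3$ for the numbers of vertices with exterior angles $\pi/4,\pi/2,3\pi/4$, the Gauss--Bonnet identity $n_1+2n_2+3n_3=8$ together with $n=n_1+n_2+n_3\ge 3$ forces $3\le n\le 8$ and leaves only a handful of angle patterns per $n$.

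Second, after fixing one side of $T$ horizontally, every edge of every tan, and hence every side of $T$, is parallel to one of the four directions $\{0,\pi/4,\pi/2,3\pi/4\}$, and every side length lies in $\mathbb{Z}_{\ge 0}\cdot 1+\mathbb{Z}_{\ge 0}\cdot\sqrt{2}$ using only the tan edges $1,\sqrt2,2,2\sqrt2$. The area $|T|=8$, the fixed angle pattern and the direction constraint reduce the closing condition of the polygon to a small system of integer equations in the signed side lengths along the four directions. Solving these systems enumerates, for each $n\in\{3,\ldots,8\}$, all candidate convex polygons that \emph{could} host a dissection into the seven tans.

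Third, for each candidate polygon I would attempt an explicit construction; successful constructions yield members of the list. I expect the actually realized list to be: the right isosceles triangle with legs $4$; six quadrangles (a rectangle, a square, two parallelograms, an isosceles and a right trapezoid); two pentagons; and four hexagons, totalling $13$.

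The main obstacle is the last step, the non-realizability half. Many candidates surviving the angle-area-direction sieve (particularly for $n=7$ and $n=8$) must be excluded by combinatorial arguments specific to the tans. The key levers I would use are: the two large triangles together occupy area $4$ out of $8$ and must have their legs or hypotenuses aligned with long edges of $T$; the unique parallelogram is the only chiral tile and forces fixed side-length parities along two directions; and at any convex vertex of angle $\pi/4$ only certain corners of tans can sit. A careful case analysis on which tile covers each extremal vertex of $T$, combined with an area bookkeeping on the uncovered region, rules out all remaining spurious candidates and leaves exactly the claimed $13$ tangrams.
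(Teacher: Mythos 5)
The first thing to note is that the paper does not prove this statement at all: Theorem~\ref{thm:convex} is imported from Wang and Hsiung \cite{wang} and merely illustrated in Figure~\ref{fig:convex}, so there is no internal proof to compare yours against. Judged on its own terms, your proposal correctly reproduces the skeleton of the classical argument (an angle/area/direction sieve followed by a realizability check), and your first step is sound: interior angles in $\{\frac{\pi}{4},\frac{\pi}{2},\frac{3\pi}{4}\}$ and the exterior-angle identity $n_1+2n_2+3n_3=8$ do bound the number of vertices by $8$. But what you have written is a plan, not a proof, and the missing parts are exactly the substantive ones.

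Two gaps are decisive. First, your step two quietly assumes more rigidity than you establish. That all tan edges lie in four global directions is easy (adjacent tans share a boundary segment, hence a direction class, and the adjacency graph is connected), but the reduction to ``a small system of integer equations'' really needs the stronger statement that in a convex tangram all tans induce translates of a single lattice, so that each side of $T$ is an integer or an integer multiple of $\sqrt{2}$ but never a genuine mix $a+b\sqrt{2}$ with $a,b>0$. That rigidity is the key lemma of Wang--Hsiung (its one-reflex-vertex analogue occupies all of Section~\ref{sec:lemma} of this paper), and you assert it without argument; without it your candidate list is substantially larger than you suggest. Second, your step four \emph{is} the theorem: the complete enumeration of surviving candidates and the rigorous exclusion of every non-realizable one (in particular all heptagons and octagons) is the bulk of the work, and you replace it with heuristics. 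None of your ``levers'' is proved, the claim that the parallelogram ``forces fixed side-length parities'' is too vague to check, and the assertion that the two large triangles must have a leg or hypotenuse along a long edge of $T$ is unjustified as stated. A symptom that the enumeration was not actually carried out: the six quadrangles are a square, a rectangle, one (non-rectangular) parallelogram, two right trapezoids and one isosceles trapezoid, not the ``two parallelograms \ldots{} an isosceles and a right trapezoid'' you predict.
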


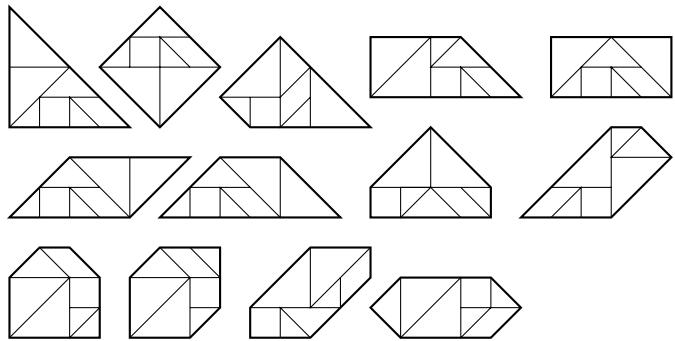
\begin{figure}
\begin{center}
\begin{tikzpicture}[xscale=.4, yscale=.4]
\draw
  (0,0) node[above right] {\tikz[xscale=.4, yscale=.4]
	{
\draw[thick]
  (0,0)--(4,0)--(0,4)--cycle
	;
\draw
  (0,2)--(2,2)--(0,0) (1,0)--(1,1)--(3,1) (2,0)--(2,1)--(3,0)
  ;
	}}
	;
\draw
  (3,0) node[above right] {\tikz[xscale=.4, yscale=.4]
	{
\draw[thick]
  (0,2)--(2,0)--(4,2)--(2,4)--cycle
	;
\draw
  (0,2)--(1,2) (3,3)--(1,3)--(1,2)--(4,2) (3,2)--(2,3)--(2,0)
  ;
	}}
	;
\draw
  (7,0) node[above right] {\tikz[xscale=.4, yscale=.4]
	{
\draw[thick]
  (0,1)--(1,0)--(5,0)--(2,3)--cycle
	;
\draw
  (3,1)--(2,0)--(2,1)--(3,2)--(3,0) (2,3)--(2,1)--(0,1) (1,1)--(1,0)
  ;
	}}
	;
\draw
  (12,1) node[above right] {\tikz[xscale=.4, yscale=.4]
	{
\draw[thick]
  (0,0)--(5,0)--(3,2)--(0,2)--cycle
	;
\draw
  (0,0)--(2,2)--(2,0) (3,2)--(2,1)--(4,1) (3,0)--(3,1)--(4,0)
  ;
	}}
	;
\draw
  (18,1) node[above right] {\tikz[xscale=.4, yscale=.4]
	{
\draw[thick]
  (0,0)--(4,0)--(4,2)--(0,2)--cycle
	;
\draw
  (0,0)--(1,1)--(1,0) (4,0)--(2,2)--(1,1)--(3,1) (2,0)--(2,1)--(3,0)
  ;
	}}
	;
\draw
  (0,-3) node[above right] {\tikz[xscale=.4, yscale=.4]
	{
\draw[thick]
  (0,0)--(4,0)--(6,2)--(2,2)--cycle
	;
\draw
  (1,0)--(1,1)--(2,1)--(2,0) (3,1)--(2,1)--(3,0) (2,2)--(4,0)--(4,2)
  ;
	}}
	;
\draw
  (5,-3) node[above right] {\tikz[xscale=.4, yscale=.4]
	{
\draw[thick]
  (0,0)--(6,0)--(4,2)--(2,2)--cycle
	;
\draw
  (1,0)--(1,1)--(2,1)--(2,0) (3,1)--(2,1)--(3,0) (2,2)--(4,0)--(4,2)
  ;
	}}
	;
\draw
  (12,-3) node[above right] {\tikz[xscale=.4, yscale=.4]
	{
\draw[thick]
  (0,0)--(4,0)--(4,1)--(2,3)--(0,1)--cycle
	;
\draw
  (0,1)--(2,1)--(2,3) (3,0)--(2,1)--(1,0)--(1,1)--(4,1) (3,1)--(4,0)
  ;
	}}
	;
\draw
  (17,-3) node[above right] {\tikz[xscale=.4, yscale=.4]
	{
\draw[thick]
  (0,0)--(3,0)--(5,2)--(4,3)--(3,3)--cycle
	;
\draw
  (3,0)--(3,3) (4,3)--(3,2)--(5,2) (1,1)--(3,1) (1,0)--(2,1)--(2,0)
  ;
	}}
	;
\draw
  (0,-7) node[above right] {\tikz[xscale=.4, yscale=.4]
	{
\draw[thick]
  (0,0)--(3,0)--(3,2)--(2,3)--(1,3)--(0,2)--cycle
	;
\draw
   (0,0)--(2,2)--(2,0)--(3,1)--(2,1) (0,2)--(3,2) (1,3)--(2,2)
  ;
	}}
	;
\draw
  (4,-7) node[above right] {\tikz[xscale=.4, yscale=.4]
	{
\draw[thick]
  (0,0)--(2,0)--(3,1)--(3,3)--(1,3)--(0,2)--cycle
	;
\draw
  (0,0)--(2,2)--(0,2) (1,3)--(2,2)--(3,2)--(2,3) (2,2)--(2,0) (3,1)--(2,1)
  ;
	}}
	;
\draw
  (8,-7) node[above right] {\tikz[xscale=.4, yscale=.4]
	{
\draw[thick]
  (0,0)--(2,0)--(4,2)--(4,3)--(2,3)--(0,1)--cycle
	;
\draw
  (0,1)--(3,1)--(3,2) (2,3)--(2,1)--(4,3) (1,0)--(1,1)--(2,0)
  ;
	}}
	;
\draw
  (12,-7) node[above right] {\tikz[xscale=.4, yscale=.4]
	{
\draw[thick]
  (0,1)--(1,0)--(4,0)--(5,1)--(4,2)--(1,2)--cycle
	;
\draw
  (1,2)--(1,0)--(3,2)--(3,0)--(4,1)--(3,1) (4,2)--(4,1)--(5,1)
  ;
	}}
	;
\end{tikzpicture}
\end{center}
\caption{All $13$ convex tangrams.
\label{fig:convex}}
\end{figure}

This motivates the question for other natural classes of tangrams. When asking for all simple $n$-gons for fixed $n$, the cases of triangles and quadrangles appear trivial or simple, see Section~\ref{sec:triangles_quadrangles}. Already for hexagons one gets uncountably many incongruent tangrams: for example, one may shift the right part of the dissection on the right-hand side of Figure~\ref{fig:pieces} slightly up, this way obtaining a continuum of hexagons. Similarly, one finds uncountably many simple $n$-gons for all $n=7,\ldots,23$. The number of vertices of any tangram is at most $23$, which is the total number of vertices of all seven tans.

Here we characterize all simple pentagonal tangrams, this way answering a question that seems to have been posed by Lindgren in 1968 (cf.\ \cite{gardner1974}). Most of them are non-convex, see e.g.\ Figure~\ref{fig:pieces}. In the case of pentagons we observe a technical problem concerning the respective position of the tans: In the present paper a \emph{lattice} always means an isometric image of $\mathbb{Z}^2$. Every tan induces a unique lattice that contains all its vertices. If a tangram admits a dissection such that all tans induce the same lattice, we call it a \emph{lattice tangram}. Otherwise we call it a \emph{non-lattice tangram}. Figure~\ref{fig:pieces} illustrates both situations. Simple polygons that are lattice tangrams have been called \emph{snug tangrams} by Read \cite{gardner1974,read1985,read2004}. 

All convex tangrams from Theorem~\ref{thm:convex} are lattice tangrams. We shall obtain the following counterpart on pentagons.

\begin{theorem}\label{thm:pentagons}
There exist, up to isometry, exactly $53$ simple pentagons that are tangrams: two convex ones, $20$ non-convex lattice ones and $31$ non-convex non-lattice ones.
\end{theorem}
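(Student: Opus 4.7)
The plan is to combine local constraints at each pentagon vertex with a global finite enumeration, treating the lattice and non-lattice cases separately.

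\textbf{Local constraints.} First I observe that the seven tans have only edge lengths $1$, $\sqrt{2}$, $2$ and interior angles that are integer multiples of $\pi/4$. Consequently, every interior angle of a pentagonal tangram $P$ is $k_i \pi/4$ with $1 \le k_i \le 7$, and since the interior angles of a pentagon sum to $3\pi$ one has $k_1+\cdots+k_5 = 12$. The finite list of admissible tuples modulo the dihedral action of a pentagon yields a short catalogue of candidate angle patterns. Each edge of $P$ is a union of tan edges or parts thereof, so its length lies in $\mathbb{Z}_{\ge 0}+\sqrt{2}\,\mathbb{Z}_{\ge 0}$, and the total area equals~$8$.

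\textbf{Lattice case.} For lattice tangrams every tan can be placed on one common copy of $\mathbb{Z}^2$, so $P$ itself is a lattice pentagon whose edges are axis-parallel or of slope $\pm 1$. For each angle pattern from the catalogue I would enumerate all simple lattice pentagons of area $8$ satisfying these edge-length and slope restrictions; this is a finite combinatorial search. For each such candidate one tests whether the seven tans actually dissect it, which I would organise by first placing the two large triangles (they together contain one half of $P$) and completing the tiling by the remaining smaller tans. This procedure should recover the two convex pentagons from Theorem~\ref{thm:convex} together with the claimed $20$ non-convex lattice pentagons.

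\textbf{Non-lattice case.} Here at least one tan is rotated by an odd multiple of $\pi/4$ relative to some other tan, so their induced lattices disagree. I would track the interfaces along which tans from the two different lattice orientations meet and argue that on such an interface the matching edge-length sums on each side force, via the irrationality of $\sqrt{2}$, that the integer and the $\sqrt{2}$-parts balance separately. These rigidity conditions reduce the problem to a classification of a few possible ``rotated-block'' configurations sitting inside $P$, again driven by the angle catalogue and the area constraint. I expect this step to be the main obstacle: the challenge is not to miss a configuration and simultaneously to rule out spurious near-solutions by careful length and area bookkeeping; the outcome should be exactly $31$ pentagons.

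Finally, I would verify pairwise non-congruence of the $2+20+31=53$ listed pentagons by comparing their cyclic angle sequences and edge-length vectors, taken up to the dihedral action, as combinatorial invariants.
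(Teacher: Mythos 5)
Your local constraints (angle multiples of $\pi/4$ summing to $3\pi$, side lengths in $\mathbb{Z}_{\ge0}+\sqrt{2}\,\mathbb{Z}_{\ge0}$, area $8$) and your treatment of the lattice case match the paper's: the same catalogue of angle strings with $k_1+\cdots+k_5=12$ drives a finite search over lattice pentagons of area $8$, followed by a tileability check organised around the two large triangles. That part is sound.

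The genuine gap is in the non-lattice case, which is the heart of the paper. You correctly identify it as ``the main obstacle,'' but the idea you offer --- that matching edge-length sums across interfaces between differently oriented tans must balance their integer and $\sqrt{2}$ parts separately --- does not control the \emph{global shape} of the interface between the two orientation classes. A priori that interface could be a complicated polygonal arc winding through the interior of $P$, and nothing in your length bookkeeping reduces it to ``a few rotated-block configurations''; the search you describe is not visibly finite or well-defined. The paper's key result (Lemma~\ref{lem}) is a topological statement proved by a careful combinatorial argument (T-vertices, an arc $\Delta$ of primal segments with $P$-pieces on one side and $Q$-pieces on the other, and the fact that two lattices differing by a $\pi/4$ rotation share at most one point): for a simple polygon with exactly one non-convex vertex, either all vertices lie on one lattice, or the line through one of the sides at the non-convex vertex cuts $T$ into two \emph{convex} generalized tangrams $T_1$, $T_2$ whose lattices differ by a $\pi/4$ rotation and which share exactly one vertex. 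Only with this structural dichotomy does the non-lattice enumeration become the finite, checkable pairing of a convex triangle or quadrangle $T_1$ with a convex triangle $T_2$ of complementary area that yields the $31$ pentagons. Without a substitute for that lemma, your argument cannot certify that no non-lattice pentagon has been missed, so the count of $53$ is not established.
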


These pentagons are given in detail in Section~\ref{sec:pentagons}. Before that, we comment briefly the situation for triangular and quadrangular tangrams (Section~\ref{sec:triangles_quadrangles}), and we provide a topological tool that permits a systematic approach to pentagons (Section~\ref{sec:lemma}). 

The present paper is an extended version of \cite{pohl2019}.


\section{Triangular and quadrangular tangrams\label{sec:triangles_quadrangles}}

\begin{proposition}\label{prop:34}
There exist, up to isometry, only one triangular and six quadrangular tangrams. All of them are convex.
\end{proposition}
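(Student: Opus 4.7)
My plan is to reduce Proposition~\ref{prop:34} to Theorem~\ref{thm:convex} by establishing that every triangular or quadrangular tangram is already convex; the counts one and six then follow from that theorem. Since every triangle is convex, the triangular assertion is immediate.

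For quadrangles I would use an angular analysis. At every vertex of a tangram polygon the interior angle is a sum of tan corner angles, and each tan corner angle is $\pi/4$, $\pi/2$ or $3\pi/4$, so every such interior angle is a positive integer multiple of $\pi/4$ strictly less than $2\pi$. In a simple quadrangle the four interior angles sum to $2\pi$. If such a quadrangle were non-convex, exactly one angle would be reflex, lying in $\{5\pi/4,3\pi/2,7\pi/4\}$, while the other three would lie in $\{\pi/4,\pi/2,3\pi/4\}$ and each would be at least $\pi/4$. Only the reflex value $5\pi/4$ leaves the remaining amount $3\pi/4$, which must split as $\pi/4+\pi/4+\pi/4$; the values $3\pi/2$ and $7\pi/4$ are excluded because the remaining three angles would then need to sum to less than $3\pi/4$. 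This pins the only candidate down to an ``arrow-head'' with angle multiset $\{5\pi/4,\pi/4,\pi/4,\pi/4\}$.

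The main obstacle is to show this arrow-head is not a tangram. Writing $a,b$ for the two edges adjacent to the reflex vertex with $a>b>0$, the closure and turning conditions determine the remaining two edges to have lengths $(a-b)\sqrt{2}/2$ and $(a+b)\sqrt{2}/2$, and the shoelace formula gives area $(a^2+b^2)/4$. Since the seven tans have total area~$8$, an arrow-head tangram would satisfy $a^2+b^2=32$. To contradict this I would exploit the rigidity at each $45^\circ$ vertex: exactly one tan sits there with its $45^\circ$ corner matching, so the initial segment of each polygon edge leaving such a vertex has length $1$, $\sqrt{2}$, or $2$ (a leg of one of the five isosceles right triangles, or a side of the parallelogram). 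A finite case analysis over the possible tan choices at the three $45^\circ$ vertices, combined with $a^2+b^2=32$ and the edge formulas above, yields no admissible configuration, completing the proof.
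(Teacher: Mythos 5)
Your reduction is sound as far as it goes: the triangular case is immediate, and the angle count pinning a non-convex quadrangle down to the multiset $\{\frac{5\pi}{4},\frac{\pi}{4},\frac{\pi}{4},\frac{\pi}{4}\}$ is exactly the paper's first step (the paper additionally uses Lemma~\ref{lem:trivial} to identify the unique triangle as the right isosceles one with legs $4$). The gap is in your elimination of the arrow-head, and it is twofold. First, the closure computation is wrong: if $a>b>0$ are the two edges at the reflex vertex, the other two sides have lengths $a\sqrt{2}+b$ and $a+b\sqrt{2}$ --- in particular they are the two \emph{longest} sides of the dart, whereas your value $(a-b)\sqrt{2}/2$ is smaller than $a$ --- and the area is $\frac{1}{2}(a^{2}+b^{2})+\frac{ab}{\sqrt{2}}$, not $\frac{1}{4}(a^{2}+b^{2})$. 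So the Diophantine condition $a^{2}+b^{2}=32$ on which your whole case analysis rests is not the correct one. Second, even with corrected formulas, the decisive step is only announced: the local observation at the $\frac{\pi}{4}$-vertices is incomplete (the $45^{\circ}$ corner of a large triangle contributes initial segments of length $2$ or $2\sqrt{2}$, the latter missing from your list $\{1,\sqrt{2},2\}$) and is in any case too weak, since it constrains only the first tan side along each edge and not the edge lengths themselves; no contradiction is actually derived.

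The paper closes the argument with a global constraint you do not invoke: each of the two long sides $\xi\ge\eta$ (those not meeting the reflex vertex) is tiled by full sides of tans and hence lies in $\{k+l\sqrt{2}:k,l\in\{0,1,\ldots\}\}$. Together with the area identity $8=\frac{1}{2}(\xi/\sqrt{2})^{2}+\frac{1}{2}(\eta-\xi/\sqrt{2})^{2}$ and the inequality $\xi\ge\eta>\xi/\sqrt{2}$ this forces $\sqrt{8(2+\sqrt{2})}\le\xi<4\sqrt{2}$, hence $\xi\in\{4+\sqrt{2},\,1+3\sqrt{2}\}$, and for neither value does the area equation admit an $\eta$ of the required form. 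If you correct your side-length and area formulas and replace the vertex-by-vertex tan analysis by this side-length argument, your proof goes through.
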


Before proving Proposition~\ref{prop:34}, we note two obvious properties of tangrams.

\begin{lemma}\label{lem:trivial}
\begin{itemize}
\item[(i)] 
The area of every tangram is $8$.
\item[(ii)]
If a tangram is a simple polygon then the sizes of its inner angles are integer multiples of $\frac{\pi}{4}$.
\end{itemize}
\end{lemma}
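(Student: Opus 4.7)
The plan is to handle the two parts separately by direct computation and a local analysis at vertices.

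For part (i), I would simply add up the areas of the seven tans. The two small right triangles with legs of length $1$ contribute $2 \cdot \frac{1}{2} = 1$; the medium right triangle with legs of length $\sqrt{2}$ contributes $\frac{(\sqrt{2})^2}{2} = 1$; the two large right triangles with legs of length $2$ contribute $2 \cdot 2 = 4$; the unit square contributes $1$; and the parallelogram with sides $1$ and $\sqrt{2}$ and angle $\frac{\pi}{4}$ contributes $1 \cdot \sqrt{2} \cdot \sin\frac{\pi}{4} = 1$. These total $8$, and since the tans tile the tangram without interior overlap, the area of the tangram equals this sum.

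For part (ii), I would first observe that every interior angle of every tan is a multiple of $\frac{\pi}{4}$: the triangular tans have angles $\frac{\pi}{4}, \frac{\pi}{4}, \frac{\pi}{2}$, the square has four angles of $\frac{\pi}{2}$, and the parallelogram has angles $\frac{\pi}{4}$ and $\frac{3\pi}{4}$. Now fix a vertex $v$ of the simple polygonal tangram $T$ and let $\alpha_v$ be the inner angle of $T$ at $v$. Since the seven tans tile $T$, a small neighbourhood of $v$ inside $T$ is partitioned by the tans that contain $v$. The contribution of each such tan to $\alpha_v$ is either its inner angle at $v$ (if $v$ is a vertex of that tan, giving one of $\frac{\pi}{4}, \frac{\pi}{2}, \frac{3\pi}{4}$) or $\pi$ (if $v$ lies in the relative interior of an edge of that tan). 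In either case the contribution is a multiple of $\frac{\pi}{4}$, so $\alpha_v$ is a sum of such multiples and therefore itself a multiple of $\frac{\pi}{4}$.

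Neither step presents a genuine obstacle; the only thing that requires care is the case analysis at a vertex $v$ of $T$, where one must rule out that a tan could contribute an angle not of the stated form. This is immediate from the fact that the tans cover a neighbourhood of $v$ inside $T$ without overlap, so each tan meets this neighbourhood either in a sector at one of its own vertices or in a half-disk along one of its edges.
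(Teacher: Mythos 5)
Your proof is correct, and it is exactly the standard argument the paper has in mind: the paper in fact states Lemma~\ref{lem:trivial} without proof, calling its two parts ``obvious properties of tangrams.'' Your area computation and the local analysis at a vertex (each tan meeting a boundary vertex either in a corner sector of angle $\frac{\pi}{4}$, $\frac{\pi}{2}$ or $\frac{3\pi}{4}$, or in a half-disk contributing $\pi$) supply precisely the details the authors omit.
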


\begin{proof}[Proof of Proposition~\ref{prop:34}]
By Lemma~\ref{lem:trivial}, only the isosceles right triangle with legs of length $4$ can be a triangular tangram. This triangle is indeed a tangram (cf.\ \cite{wang} or Figure~\ref{fig:convex}).

All six convex quadrangular tangrams are known from \cite{wang}, see Figure~\ref{fig:convex}. It remains to show that there are no non-convex quadrangular tangrams. For that, assume that we are given such a quadrangle $Q$. Since its inner angles sum up to $2\pi$ and satisfy Lemma~\ref{lem:trivial}(ii), their sizes must be $\frac{\pi}{4}$, $\frac{\pi}{4}$, $\frac{\pi}{4}$ and $\frac{5\pi}{4}$. Let $\xi,\eta \in \mathbb{R}$ be the lengths of the sides not emanating from the non-convex vertex, see Figure~\ref{fig:quadrangle}.
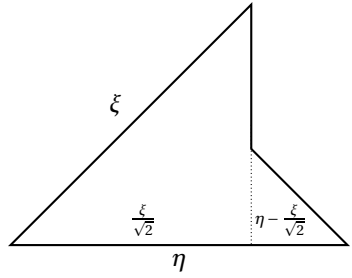
\begin{figure}
\begin{center}
\begin{tikzpicture}[xscale=.8, yscale=.8]

\draw[thick]
  (0,0)--(4,4)--(4,1.6)--(5.6,0)--cycle
	(2,2) node[above left] {$\xi$}
	(2.8,0) node[below] {$\eta$}
	(2.2,0) node[above] {\scriptsize$\frac{\xi}{\sqrt{2}}$}
	(4.5,0) node[above] {\scriptsize$\eta-\frac{\xi}{\sqrt{2}}$}
	;
\draw[densely dotted]
  (4,2)--(4,0)
  ;
	
\end{tikzpicture}
\end{center}
\caption{A potential non-convex quadrangular tangram.
\label{fig:quadrangle}}
\end{figure}
We have, w.l.o.g.,
\begin{equation}\label{eq:4estimate1}
\xi \ge \eta > \frac{\xi}{\sqrt{2}}.
\end{equation}
Lemma~\ref{lem:trivial}(i) together with \eqref{eq:4estimate1} gives
\begin{equation}\label{eq:4area}
8=\frac{1}{2}\left(\frac{\xi}{\sqrt{2}}\right)^2+\frac{1}{2}\left(\eta-\frac{\xi}{\sqrt{2}}\right)^2
\;\left\{
\begin{array}{l}
\displaystyle > \frac{1}{2}\left(\frac{\xi}{\sqrt{2}}\right)^2=\frac{1}{4}\xi^2,\\[2ex]
\displaystyle\le \frac{1}{2}\left(\frac{\xi}{\sqrt{2}}\right)^2+\frac{1}{2}\left(\xi-\frac{\xi}{\sqrt{2}}\right)^2=\left(\frac{2-\sqrt{2}}{2}\right)\xi^2.
\end{array}
\right.
\end{equation}
The two inequalities give
\begin{equation}\label{eq:4estimate2}
5.226\ldots=\sqrt{8\left(2+\sqrt{2}\right)}\le\xi < 4\sqrt{2}=5.656\ldots
\end{equation}
Since $\xi$ and $\eta$ are sums of side lengths of tans, which are integers or integer multiples of $\sqrt{2}$, we obtain
\begin{equation}\label{eq:4representation}
\xi,\eta \in \left\{k+l\sqrt{2}: k,l \in \{0,1,\ldots\}\right\}.
\end{equation}
Then \eqref{eq:4estimate2} implies
\[
\xi \in \left\{ 4+\sqrt{2},1+3\sqrt{2}\right\}.
\]
For any of the two choices of $\xi$, we see that the left-hand equation from \eqref{eq:4area} does not have a solution $\eta$ that satisfies \eqref{eq:4representation}.
\end{proof}


\section{A topological lemma\label{sec:lemma}}

Now we work in a slightly generalized setting. An isosceles right triangle with legs of length $1$ is called a \emph{basic triangle}. A polygon admitting a dissection into finitely many basic triangles is called a \emph{generalized tangram}. Of course, every tangram is a generalized tangram, since every tan can be subdivided into basic triangles (see the left-hand part of Figure~\ref{fig:notations}). Although that subdivision is not unique (namely, for the square tan as well as for the large triangular tans), the lattice associated to every basic triangle coincides with that of the original tan. So the concepts of lattice and non-lattice tangrams extend naturally to generalized tangrams.

In the remainder of this section we shall prove the following.

\begin{lemma}
\label{lem}
Let $T$ be a generalized tangram that is a simple polygon having exactly one non-convex vertex $v_1$. Then one of the following is satisfied.
\begin{itemize}
\item[(I)] All vertices of $T$ belong to the same lattice $\Lambda=\varphi\left(\mathbb{Z}^2\right)$, where $\varphi$ is a Euclidean isometry. Every side of $T$ is parallel to one of the segments $\varphi((0,0)(1,0))$, $\varphi((0,0)(0,1))$, $\varphi((0,0)(1,1))$ or $\varphi((0,0)(1,-1))$.
\item[(II)] One of the straight lines defined by a side of $T$ emanating from $v_1$ dissects $T$ into two convex generalized tangrams $T_1$ and $T_2$ with corresponding lattices $\Lambda_1$ and $\Lambda_2$ as in (I). The lattice $\Lambda_2$ is the image of $\Lambda_1$ under a rotation by $\frac{\pi}{4}$.
\end{itemize}
\end{lemma}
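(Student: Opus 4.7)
My approach is to fix a dissection of $T$ into basic triangles and assign to each basic triangle $\Delta$ its canonical lattice $\Lambda(\Delta)$: the unique square lattice whose basis consists of the two leg vectors of $\Delta$ and which contains all three vertices of $\Delta$. Grouping adjacent basic triangles sharing a common canonical lattice into maximal connected unions gives \emph{lattice-homogeneous regions} $R_1,\ldots,R_k$ with associated lattices $\Lambda_1,\ldots,\Lambda_k$. Each $R_j$ is itself a generalized tangram on $\Lambda_j$, so all its vertices lie in $\Lambda_j$ and all its sides run in the four lattice directions described in~(I).

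The first step is to show that any two adjacent regions $R_i$, $R_j$ (sharing a boundary segment of positive length in the interior of $T$) satisfy the relation that $\Lambda_j$ is the image of $\Lambda_i$ under a rotation through $\pi/4$. The direction of the shared segment must belong to the four-direction set of both lattices, and these four-direction sets are identical if the two lattice orientations differ by a multiple of $\pi/4$ and disjoint otherwise. If the orientations agreed modulo $\pi/2$ the two lattices would be parallel and, sharing the two lattice endpoints of the segment, would coincide, violating maximality of the regions. So the orientations differ by exactly $\pi/4$ modulo $\pi/2$.

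If $k=1$ then conclusion~(I) holds immediately. Assume $k\ge 2$. The interior interfaces between distinct regions decompose into maximal connected polygonal arcs, which I call \emph{seams}. The heart of the argument is to show that there is exactly one seam, that it is a straight line segment with both endpoints on $\partial T$, that one of its endpoints is $v_1$, and that it extends a side of $T$ emanating from $v_1$. The strategy is a reflex-angle count. At every interior bend of a seam, the two adjacent regions meet with multiples of $\pi/4$ summing to $2\pi$; at least one such angle must exceed $\pi$, producing a reflex vertex of some $R_j$ interior to $T$. A closed-loop seam encloses an island whose boundary vertices each produce a reflex vertex of the surrounding region; a seam endpoint in the interior of $T$, or one in the relative interior of an edge of $T$ meeting the boundary at a non-straight angle, likewise forces additional reflex vertices. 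Comparing the sum of turning angles of the various regions with the turning angle of $T$, which by hypothesis has a reflex angle only at $v_1$, constrains the number and shape of seams tightly enough to leave only the configuration claimed in~(II).

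The main obstacle is to carry out this turning-angle accounting with enough precision to exclude every alternative (closed loops, seams with multiple bends, multiple simultaneous seams, seams ending in the interior of $T$, and so on). Once that reduction is in hand, the remaining assertions follow quickly: the reflex angle of $T$ at $v_1$ lies in $\{5\pi/4,3\pi/2,7\pi/4\}$, and the seam through $v_1$ splits it into two parts whose sum is this value; for both parts to bound convex polygons, one must equal $\pi$, which means the seam extends one of the two sides of $T$ at $v_1$, and the other part is at most $\pi$. Every remaining vertex of $T_1$ or $T_2$ is either a convex vertex of $T$ or an interior point of an edge of $T$ and so carries an angle at most $\pi$, so both $T_1$ and $T_2$ are convex. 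Together with the $\pi/4$ rotation between $\Lambda_1$ and $\Lambda_2$ established earlier, this yields conclusion~(II).
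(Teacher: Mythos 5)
Your overall plan---decompose the dissection into lattice-coherent pieces and then show that the interfaces between differently oriented pieces collapse to a single straight seam through $v_1$---is in the same spirit as the paper's, but two essential steps are missing or wrong. First, the claim that any two adjacent regions carry lattices differing by a rotation of $\frac{\pi}{4}$ does not follow from your argument: two adjacent regions can carry \emph{parallel but distinct} lattices, i.e.\ translates of each other by a non-lattice vector (picture two unit squares meeting along a vertical line but offset vertically by $\frac12$). Your step ``sharing the two lattice endpoints of the segment, [the lattices] would coincide'' assumes that the endpoints of the shared interface segment are lattice points of both lattices, which is not automatic, since the interface may be a proper sub-segment of a triangle edge on either side; and maximality of your connected regions only forbids adjacent regions with \emph{equal} lattices, not with parallel distinct ones. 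The paper keeps both phenomena in play separately: distinct translated lattices share \emph{no} point, while $\frac{\pi}{4}$-rotated lattices share \emph{at most one} point (its Fact~\ref{fact:common_vertices}), and these two facts---not a turning-angle count---are what ultimately rigidify the picture.

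Second, and more seriously, the heart of the lemma is exactly the reduction you defer: excluding closed seams, bent or branching seams, several seams, and seams ending in the interior of $T$ or in the relative interior of a side. You present the ``reflex-angle count'' only as a strategy and explicitly acknowledge that carrying it out is the main obstacle; as written there is no identity or inequality that converts reflex vertices of the regions (which are perfectly admissible, since the regions need not be convex or even simple) into a contradiction with $T$ having a single reflex vertex. The paper's substitute is a concrete combinatorial device: a directed arc $\Delta$ of ``primal segments'' in the common boundary of the two orientation classes, always keeping one class on its left and the other on its right, which is shown to be uniquely reconstructible from either end; this proves there is exactly one switch point $a_0$ on $\BD(T)$, pins the other end of $\Delta$ to $v_1$ collinearly with the side $v_kv_1$, and then the at-most-one-common-vertex property forces $\Delta$ to be a single straight segment. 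Finally, your closing inference ``for both parts to bound convex polygons, one must equal $\pi$'' is a non sequitur: a reflex angle of $\frac{3\pi}{2}$ splits as $\frac{3\pi}{4}+\frac{3\pi}{4}$ with both parts convex and neither equal to $\pi$. In the paper the collinearity of the seam with a side at $v_1$ comes instead from the observation that at most one tile of the dissection can have $v_1$ in the relative interior of one of its sides.
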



\subsection{Preparations}

Suppose now that we are given a generalized tangram $T$ satisfying the assumptions of Lemma~\ref{lem}, along with a dissection into basic triangles. Two basic triangles are called equally oriented if their associated lattices differ at most by a translation. Since $T$ is a simple polygon, there are at most two different orientations. Let $P$ be the union of one class of equally oriented basic triangles, and let $Q$ be the union of the remaining basic triangles. Then $Q$ is either empty or the lattices associated to the pieces of $Q$ are obtained from those of $P$ by rotations of $\frac{\pi}{4}$. Two basic triangles of $P$ are considered equivalent if their lattices agree. Let $P_1,\ldots,P_m$ be the respective unions of all classes of equivalent basic triangles of $P$. Similarly, $Q$ splits into unions $Q_1,\ldots,Q_n$ of equivalent triangles. This way we obtain the dissection
\begin{equation}\label{eq:dissection}
T=(P_1\cup\ldots\cup P_m)\cup(Q_1\cup\ldots\cup Q_n).
\end{equation}
(Figure~\ref{fig:notations} illustrates an eight-angled tangram with corresponding dissections into tans as well as into basic triangles, on the left, and the resulting dissection introduced above, on the right.)
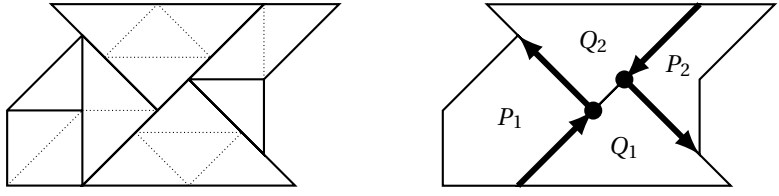
\begin{figure}
\begin{center}
\begin{tikzpicture}
\draw[densely dotted] 
	(0,0)--(1,1) (1,1)--(2,1) (3.414,1.414)--(3.414,2.414) 
	(1.707,.707)--(2.414,0)--(3.121,.707)--cycle
	(1.293,1.707)--(2.707,1.707)--(2,2.414)--cycle
  ;
\draw[thick]
  (0,0)--(1,0)--(2,1)--(1,2)--(0,1)--cycle
	(1,0)--(3.828,0)--(2.414,1.414)--cycle
	(2,1)--(3.414,2.414)--(.586,2.414)--cycle
	(2.414,1.414)--(3.414,2.414)--(4.414,2.414)--(3.414,1.414)--(3.414,.414)--cycle
	
	(0,1)--(1,1) (1,0)--(1,2)
	(2.414,1.414)--(3.414,1.414)
	;
\draw
  (8,1.207) node {
	\tikz{
\draw[thick]
  (0,0)--(1,0)--(2,1)--(1,2)--(0,1)--cycle
	(1,0)--(3.828,0)--(2.414,1.414)--cycle
	(2,1)--(3.414,2.414)--(.586,2.414)--cycle
	(2.414,1.414)--(3.414,2.414)--(4.414,2.414)--(3.414,1.414)--(3.414,.414)--cycle
  ;	
\fill
  (2,1) circle (1.2mm)
	(2.414,1.414) circle (1.2mm)
	;
\draw[line width=.8mm,-latex]
  (1,0)--(2,1)
  ;
\draw[line width=.8mm,-latex]
	(3.414,2.414)--(2.414,1.414)
  ;
\draw[line width=.8mm,-latex]
  (2,1)--(1,2)
	;
\draw[line width=.8mm,-latex]
	(2.414,1.414)--(3.414,.414)
  ;
\draw
  (.9,.9) node {$P_1$}	
  (2.414,.5) node {$Q_1$}	
  (2,1.914) node {$Q_2$}	
  (3.13,1.6) node {$P_2$}	
	}}
	;
\end{tikzpicture}
\end{center}
\caption{
A tangram with a subdivision into basic triangles (left) and concepts for the proof of Lemma~\ref{lem} (right).
\label{fig:notations}}
\end{figure}
 
We cannot assume the polygons $P_i$ and $Q_j$ to be simple or even connected. In that setting we call an element $v$ of the boundary $\BD(R)$ of a polygon $R$ a \emph{vertex of $R$} if there is no circular disc $D$ centered at $v$ such that $D \cap R$ is a half-disc of $D$.
When speaking of a \emph{side of $R$}, we mean a maximal line segment in $\BD(R)$ that does not contain one of the vertices 
of $R$ in its relative interior.
 
\begin{fact}\label{fact:common_vertices}
Let $1 \le i < i' \le m$ and $1 \le j < j' \le n$. Then 
\begin{itemize}
\item[(i)] 
$P_{i}$ and $P_{i'}$ (and, similarly, $Q_{j}$ and $Q_{j'}$) do not have a common vertex,
\item[(ii)] 
$P_i$ and $Q_j$ have at most one common vertex.
\end{itemize}
\end{fact}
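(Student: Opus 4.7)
The plan is to reduce both parts of the fact to a short lattice intersection computation involving $\Lambda_i, \Lambda_{i'}$ and $\Lambda_i, \Lambda_j'$, where I use primes to distinguish the lattices associated to the $Q$-pieces. The key preliminary observation I would establish first is that every vertex of $P_i$, in the sense of the definition preceding the fact, must be a lattice point of $\Lambda_i$ (and analogously for $Q_j$). Indeed, if $x$ is a boundary point of $P_i$ that is not a vertex of any of the basic triangles constituting $P_i$, then $x$ lies in the relative interior of an edge of some basic triangle $T' \subset P_i$. The other basic triangle abutting this edge is either in $P_i$ (in which case $x$ is an interior point of $P_i$) or not in $P_i$ (in which case a small disc about $x$ meets $P_i$ in a half-disc, so $x$ is not a vertex of $P_i$ in the given sense). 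Hence every vertex of $P_i$ lies on $\Lambda_i$.

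For part (i), the $P_i$ share a common orientation class, so $\Lambda_i$ and $\Lambda_{i'}$ are two translates of one fixed rotated copy of $\mathbb{Z}^2$; by construction of the equivalence classes they are distinct. Two distinct translates of the same lattice are disjoint: if they shared any point, then the relative translation vector would lie in the underlying lattice, forcing $\Lambda_i=\Lambda_{i'}$. Hence $\Lambda_i\cap\Lambda_{i'}=\emptyset$, and no vertex can belong to both $P_i$ and $P_{i'}$. The same argument works verbatim for $Q_j$ and $Q_{j'}$.

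For part (ii), the lattice $\Lambda_j'$ of $Q_j$ is obtained from $\Lambda_i$ by a $\frac{\pi}{4}$-rotation followed by a translation. If $x,y\in\Lambda_i\cap\Lambda_j'$, then $x-y$ lies in both $\Lambda_i$ and in the $\frac{\pi}{4}$-rotation of $\Lambda_i$ (the translations cancel in the difference). After placing coordinates so that $\Lambda_i=\mathbb{Z}^2$, such a difference $(a,b)\in\mathbb{Z}^2$ would have to satisfy $\frac{a+b}{\sqrt{2}},\frac{b-a}{\sqrt{2}}\in\mathbb{Z}$, which by the irrationality of $\sqrt{2}$ forces $a=b=0$. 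Thus $|\Lambda_i\cap\Lambda_j'|\le 1$, and $P_i$ and $Q_j$ share at most one vertex.

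The only step that genuinely requires care is the preliminary observation about vertices, because the pieces $P_i$ and $Q_j$ need not be simple or even connected; the rest is straightforward lattice arithmetic. I expect no other obstacles.
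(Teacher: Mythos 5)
Your argument is correct and follows essentially the same route as the paper: reduce both claims to the statements that distinct translates of one lattice are disjoint and that two lattices differing by a $\frac{\pi}{4}$-rotation share at most one point. You merely make explicit two steps the paper leaves implicit (that every vertex of $P_i$ in the half-disc sense is a vertex of a basic triangle of $P_i$, hence a point of $\Lambda_i$, and the $\sqrt{2}$-irrationality computation), which is a welcome but not essentially different elaboration.
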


\begin{proof}
(i): A common vertex of $P_i$ and $P_{i'}$ would belong to both the lattices associated to $P_i$ and $P_{i'}$, respectively. But this intersection is empty, because these lattices do not agree and are translates of each other.

(ii): The lattices associated to $P_i$ and $Q_j$ are images of each other under a rotation by $\frac{\pi}{4}$. Thus they have at most one point in common, the only possible joint vertex of $P_i$ and $Q_j$. 
\end{proof}

The \emph{skeleton} $S$ of the dissection in \eqref{eq:dissection} is the union of the boundaries of its pieces, i.e.,
\[
S=\BD(P_1) \cup \ldots\cup \BD(P_m) \cup \BD(Q_1) \cup\ldots\cup \BD(Q_n).
\]
We shall deal with arcs contained in $S$. We say that an arc starts (or ends) in $P_i$ if a segment of positive length of the beginning (or the end) of that arc is contained in $\BD(P_i)$.

\begin{fact}\label{fact:arc_in_Pi}
Let $\Theta \subseteq \BD(T) \cap P$ be an arc that does not contain the non-convex vertex $v_1$ in its relative interior. If $\Theta$ starts in $P_i$, $1 \le i \le n$, then $\Theta \subseteq P_i$. (The analogue applies to arcs in $\BD(T) \cap Q$.)
\end{fact}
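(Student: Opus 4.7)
The plan is to run a contradiction via a supremum. Parametrize $\Theta$ as a continuous injection $\Theta:[0,1]\to\BD(T)\cap P$ and set
\[
t^\star=\sup\{t\in[0,1]:\Theta([0,t])\subseteq P_i\}.
\]
Since each $P_k$ is closed and $\Theta$ starts in $P_i$, we have $t^\star>0$ and $\Theta([0,t^\star])\subseteq P_i$. The goal is to rule out $t^\star<1$.

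First I would establish a preparatory claim: for $k\ne k'$, the pieces $P_k$ and $P_{k'}$ cannot share a boundary segment of positive length along $\BD(T)$, since otherwise a basic triangle of $P_k$ and a basic triangle of $P_{k'}$ would both sit inside $T$ adjacent to that segment, violating the disjointness of tile interiors. Combined with Fact~\ref{fact:common_vertices}(i), this makes every set $P_k\cap P_{k'}\cap\BD(T)$ finite. Since $\Theta$ is injective, only finitely many parameters in $[0,1]$ map into $\bigcup_{k\ne k'}P_k\cap P_{k'}$, so one can shrink a right-neighborhood $(t^\star,t^\star+\delta)$ of $t^\star$ to lie entirely in a single piece $P_{i''}$ with $i''\ne i$. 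Set $v=\Theta(t^\star)\in P_i\cap P_{i''}$.

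The decisive step is to show that $v$ is a vertex of both $P_i$ and $P_{i''}$, which contradicts Fact~\ref{fact:common_vertices}(i). Because $t^\star\in(0,1)$ lies in the relative interior of $\Theta$, the hypothesis gives $v\ne v_1$, so the inner angle of $T$ at $v$ is at most $\pi$. Choose an open disc $D$ around $v$ small enough that $T\cap D$ is the corresponding wedge, whose boundary in $D$ consists of two consecutive closed segments $\sigma_-,\sigma_+$ of $\BD(T)$ meeting at $v$, with $\sigma_-\subseteq\BD(P_i)$ and $\sigma_+\subseteq\BD(P_{i''})$; shrinking $D$ further if needed, the finiteness of $P_i\cap P_{i''}\cap\BD(T)$ ensures $\sigma_+\cap P_i=\{v\}$. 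Then $P_i\cap D$ is strictly contained in the wedge $T\cap D$ and in particular cannot equal a half-disc of $D$, so by the definition of a vertex, $v$ is a vertex of $P_i$. The symmetric argument gives that $v$ is a vertex of $P_{i''}$, which is the required contradiction. The analogue for the $Q_j$'s is identical.

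The main obstacle I expect is the preparatory claim and the accompanying topological bookkeeping: once one guarantees that beyond $t^\star$ the arc genuinely enters a unique next piece $P_{i''}$, rather than oscillating or running along a shared boundary, the vertex/half-disc dichotomy at $v$ carries the proof with essentially no further effort.
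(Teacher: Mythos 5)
Your proof is correct and follows essentially the same route as the paper: locate the first point where the arc switches from $P_i$ to another piece $P_{i''}$, observe that since this point is not the non-convex vertex $v_1$ it must be a common vertex of $P_i$ and $P_{i''}$, and contradict Fact~\ref{fact:common_vertices}(i). The paper states this in three sentences and leaves the supremum argument and the half-disc/wedge verification implicit; you have simply made that bookkeeping explicit.
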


\begin{proof}
Assume that $\Theta \not\subseteq P_i$. Then there is a point $x_0 \in \Theta$ where $\Theta$ switches from $P_i$ into some $P_{i'}$ with $i' \ne i$. Since $\Theta$ does not pass through the only non-convex vertex $v_1$ of $T$, the point $x_0$ must be a common vertex of $P_i$ and $P_{i'}$. This contradicts Fact~\ref{fact:common_vertices}(i).
\end{proof}

Let $v_1,\ldots,v_k$ be the vertices of $T$ appearing along its boundary in counter-clockwise direction. These are illustrated in the left-hand part of Figure~\ref{fig:arc}.
\begin{figure}
\begin{center}
\begin{tikzpicture}
\draw
  (2,1.9) node {\tikz
	{
\draw[thick]
	(2.5,4)--(2.4,4) (2.3,4)--(1,4)--(0,3)--(0,2)--(2,2)--(1,1)--(1,0)--(2.3,0) (2.4,0)--(2.5,0) 
	(1.4,2) node[below] {$v_1$}
	(1,1) node[left] {$v_2$}
	(1,0) node[left] {$v_3$}
	(0,2) node[left] {$v_k$}
	(0,3) node[left] {$v_{k-1}$}
	(1.9,1.5) node {$P_1$}
	;
	}}
	;
\fill
  (7.5,1) circle (1.2mm)
  (6,1) circle (1.2mm)
  (5,2) circle (1.2mm)
  (7,2) circle (1.2mm)
  (7,3) circle (1.2mm)
	;
\draw[line width=.8mm,-latex]
  (9,2.5)--(7.5,1)
  ;
\draw[line width=.8mm,-latex]
  (7.5,1)--(6,1)
  ;
\draw[line width=.8mm,-latex]
  (6,1)--(5,2)
  ;
\draw[line width=.8mm,-latex]
  (5,2)--(7,2)
  ;
\draw[line width=.8mm,-latex]
  (7,2)--(7,3)
  ;
\draw[line width=.8mm,-latex]
  (7,3)--(5.5,3)
  ;
\draw[thick]
	(7.5,4)--(7.6,4) (7.7,4)--(9,4)--(9,1)--(8,0)--(7.7,0) (7.6,0)--(7.5,0) 
	(7.5,1)--(7.22,.72) (7.15,.65)--(7.08,.58)
	(6,1)--(5.6,1) (5.5,1)--(5.4,1)
	(5,2)--(4.72,2.28) (4.65,2.35)--(4.58,2.42)
	(7,2)--(7.4,2) (7.5,2)--(7.6,2)
	(7,3)--(7,3.4) (7,3.5)--(7,3.6)
  (9,2.5) node[right] {$a_0$}
  (7.75,.75) node {$a_1$}
  (6,.69) node {$a_2$}
  (4.75,1.75) node {$a_3$}
  (7,1.69) node {$a_4$}
  (7.37,3) node {$a_5$}
  (8.6,1.6) node {$P_{i_1}$}	
  (6.8,.75) node {$P_{i_2}$}	
  (5.25,1.4) node {$P_{i_3}$}	
  (5.75,2.25) node {$P_{i_4}$}	
  (6.7,2.4) node {$P_{i_5}$}	
  (6.25,2.75) node {$P_{i_6}$}	
  (8.1,2.1) node {$Q_{j_1}$}	
  (6.8,1.25) node {$Q_{j_2}$}	
  (6,1.4) node {$Q_{j_3}$}	
  (6.25,1.75) node {$Q_{j_4}$}	
  (7.35,2.4) node {$Q_{j_5}$}	
  (6.25,3.25) node {$Q_{j_6}$}	
	;
\end{tikzpicture}
\end{center}
\caption{The vertices of $T$ and the arc $\Delta$.
\label{fig:arc}}
\end{figure}
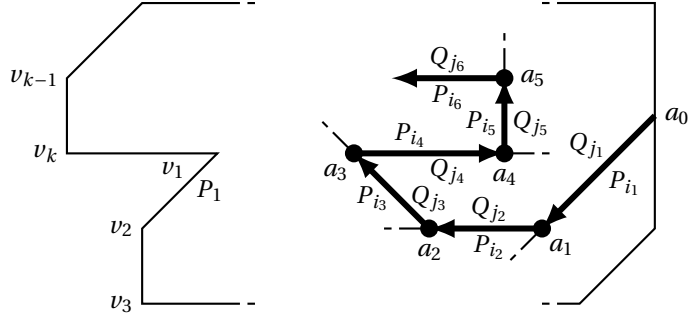

\begin{fact}\label{fact:start(II)}
From now on we can assume that
\begin{itemize}
\item[(i)]
the arc $v_1v_2\ldots v_k$ starts in $P_1$ and $v_1$ is a vertex of $P_1$,
\item[(ii)]
$\BD(T) \not\subseteq P$ and $\BD(T) \not\subseteq Q$.
\end{itemize}
\end{fact}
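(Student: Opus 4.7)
The plan is to treat (i) as a labeling convention always achievable after possibly swapping roles and reflecting, and (ii) as a reduction: we show that if either $\BD(T) \subseteq P$ or $\BD(T) \subseteq Q$, then conclusion (I) of Lemma~\ref{lem} is already in force, so nothing more remains to prove in that case. The tools needed are Lemma~\ref{lem:trivial}, Fact~\ref{fact:common_vertices}, Fact~\ref{fact:arc_in_Pi}, and a short angular-counting argument at the vertices of $T$.

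For~(i): by Lemma~\ref{lem:trivial}(ii) and non-convexity, the interior angle $\theta$ of $T$ at $v_1$ is a multiple of $\pi/4$ with $\pi < \theta < 2\pi$, hence $\theta \in \{5\pi/4,6\pi/4,7\pi/4\}$. Let $R', R''$ denote the pieces of the dissection~\eqref{eq:dissection} adjacent at $v_1$ to the two edges of $T$ emanating from $v_1$, on the interior side. If neither $R'$ nor $R''$ had $v_1$ as a vertex, then in a small disc $D$ centered at $v_1$ both $R' \cap D$ and $R'' \cap D$ would be half-discs of angular measure $\pi$; being interior-disjoint subsets of the $T$-sector of measure $\theta$, their total angular measure would satisfy $2\pi \le \theta$, contradicting $\theta < 2\pi$. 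Hence at least one of $R', R''$ has $v_1$ as a vertex; after reflecting $T$ if needed we may assume this is the piece incident to the edge $v_1 v_2$, and after swapping $P \leftrightarrow Q$ and renumbering we call this piece $P_1$. The initial segment of the arc $v_1 v_2 \ldots v_k$ then lies on $\BD(P_1)$, proving~(i).

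For~(ii): suppose $\BD(T) \subseteq P$; the case $\BD(T) \subseteq Q$ is symmetric. By~(i) the arc $v_1 v_2 \ldots v_k$ starts in $P_1$ and does not contain $v_1$ in its relative interior, so Fact~\ref{fact:arc_in_Pi} yields $v_1 v_2 \ldots v_k \subseteq P_1$; in particular $v_k \in \BD(P_1)$. The remaining edge $v_k v_1$ of $\BD(T)$ lies in $P$ by hypothesis and starts in some piece $P_{i'}$; again by Fact~\ref{fact:arc_in_Pi} it lies entirely in $P_{i'}$. At the convex vertex $v_k$ the inner angle of $T$ is strictly less than $\pi$, so no piece contained in $T$ can be locally a half-disc at $v_k$; thus $v_k$ is a vertex of both $P_1$ and $P_{i'}$. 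Fact~\ref{fact:common_vertices}(i) then forces $P_{i'} = P_1$, so $\BD(T) \subseteq P_1$. Consequently every vertex of $T$ lies in the lattice $\Lambda_1$, and by Lemma~\ref{lem:trivial}(ii) every side of $T$ is parallel to one of the four principal directions of $\Lambda_1$. Thus conclusion~(I) of Lemma~\ref{lem} holds and no further argument is required; henceforth we may assume~(ii).

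The most delicate step is the angular-counting in~(i), which rules out the degenerate possibility that both pieces incident to $v_1$ along $\BD(T)$ are locally flat (half-discs) at $v_1$; everything else is bookkeeping driven by Facts~\ref{fact:common_vertices} and~\ref{fact:arc_in_Pi}.
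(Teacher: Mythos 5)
Your proof is correct and follows essentially the same route as the paper's: part (i) is the same renaming/reorientation argument (you additionally justify, via the angular count at $v_1$, the existence of a piece having $v_1$ as a vertex, which the paper asserts without proof), and part (ii) reduces the case $\BD(T)\subseteq P$ to conclusion (I) of Lemma~\ref{lem} by applying Fact~\ref{fact:arc_in_Pi} to the arc $v_1v_2\ldots v_k$, exactly as in the paper. The only cosmetic differences are that the paper dismisses $\BD(T)\subseteq Q$ outright as incompatible with (i) rather than by symmetry, and that it stops once all vertices $v_1,\ldots,v_k$ are known to be vertices of $P_1$, without tracking the closing edge $v_kv_1$ as you do.
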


\begin{proof}
(i): There exists one of the pieces $P_i$, $1 \le i \le m$, or $Q_j$, $1 \le j \le n$, such that $v_1$ is a vertex of that piece and one of the segments $v_1v_2$ or $v_1v_k$ starts in that piece. By possibly renaming the pieces in \eqref{eq:dissection} we can assume that that piece is $P_1$. By possibly reversing the order of vertices of $T$ we can assume that $v_1v_2$ starts in $P_1$, w.l.o.g.

(ii): If $\BD(T) \subseteq P$, then application of Fact~\ref{fact:arc_in_Pi} to the arc $\Theta=v_1v_2\ldots v_k$, which starts in $P_1$ by (i), yields $\Theta \subseteq P_1$. Consequently, all vertices $v_1,\ldots,v_k$ of $T$ are vertices of $P_1$, which belong to the same lattice according to the definition of $P_1$. This way we reach claim (I) of Lemma~\ref{lem}, and the proof is complete if $\BD(T) \subseteq P$. Thus the case $\BD(T) \subseteq P$ does not require further consideration.

The situation $\BD(T) \subseteq Q$ does not appear, since $v_1v_2$ starts in $P_1$ by (i).
\end{proof}


\subsection{The arc $\Delta$}

We call an interior point $x$ of $T$ a \emph{T-vertex} of the dissection \eqref{eq:dissection} if $x$ is a joint vertex of two pieces $P_i$ and $Q_j$ and belongs to the relative interior of a side of some $P_{i'}$, $i'\ne i$, or $Q_{j'}$, $j'\ne j$. That is, a side of $P_{i'}$ or $Q_{j'}$ passes through $x$ and at least one joint segment of $P_i$ and $Q_j$ emanates from $x$ and cannot be continued over $x$ within the skeleton $S$. Then we say that $P_{i}$ and $Q_{j}$ are \emph{below} the T-vertex $x$ and $P_{i'}$ (or $Q_{j'}$, respectively) is \emph{above} $x$. (Note that the name T-vertex is motivated by the shape of the capital letter T, but not by the name $T$ of our polygon.)

A directed line segment $xy \subseteq S$ is called a \emph{primal segment} if there exist $i$ and $j$ such that $xy \subseteq P_i \cap Q_j$, $x$ is a joint vertex of $P_i$ and $Q_j$, $y$ is a vertex of one of $P_i$ or $Q_j$, and $xy$ does not contain further vertices of $P_i$ or $Q_j$. Figure~\ref{fig:notations} illustrates all T-vertices as small circles and all primal segments as arrows.

\begin{fact}\label{fact:primal}
Let $xy \subseteq P_i \cap Q_j$ be a primal segment. Then either $y=v_1$ and $v_1$ is in the relative interior of some side of one of $P_i$ or $Q_j$ or $y$ is a T-vertex having one of $P_i$ or $Q_j$ above.
\end{fact}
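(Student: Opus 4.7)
I would combine the vertex-incidence information in Fact~\ref{fact:common_vertices} with a careful angular analysis at the endpoint $y$ of the primal segment. The first step is to isolate the two roles of $P_i$ and $Q_j$ at $y$. By definition $y$ is a vertex of at least one of them, and if it were a vertex of both then $x$ and $y$ would be two distinct joint vertices of $P_i$ and $Q_j$, contradicting Fact~\ref{fact:common_vertices}(ii). So exactly one of $P_i,Q_j$, which I denote by $A$, has $y$ as a vertex, and the other, $B$, has $y$ on its boundary but not as a vertex, forcing $y$ into the relative interior of a side of $B$. In particular, in every sufficiently small disc $D$ around $y$ the set $B\cap D$ is a half-disc of angular measure $\pi$.

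The second step shows that if $y\in\BD(T)$ then $y=v_1$. Before the case split I note that the open segment $xy$ lies in $\operatorname{int}(T)$, because $P_i$ and $Q_j$ have disjoint interiors and therefore sit on opposite sides of $xy$, so $xy$ cannot run along $\BD(T)$. Suppose now $y\in\BD(T)\setminus\{v_1\}$. If $y$ is a convex vertex of $T$, its interior angle in $T$ is strictly less than $\pi$, so the half-disc $B\cap D$ of angle $\pi$ cannot fit into $T\cap D$, a contradiction. If $y$ lies in the relative interior of a side of $T$, then $T\cap D$ is itself a half-disc, the side of $B$ through $y$ runs along the line $xy$, and its prolongation beyond $y$ points opposite to $xy$, hence outside $T$, again contradicting $B\subseteq T$. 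Thus $y=v_1$, and the first alternative of the statement is established.

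In the remaining case $y\in\operatorname{int}(T)$ I work in a disc $D\subset\operatorname{int}(T)$ centered at $y$. The half-disc $B\cap D$ has angular measure $\pi$, and $A\cap D$ contributes wedges of total angular measure $\alpha\le\pi$ disjoint from $B\cap D$. The case $\alpha=\pi$ would leave no room for other pieces, and the alternation of pieces around $y$ would then force $A\cap D$ to be a single wedge of angle $\pi$, i.e., a half-disc, contradicting that $y$ is a vertex of $A$. Hence $\alpha<\pi$ and a complementary angular region of measure $\pi-\alpha>0$ remains. Every piece meeting $y$ inside this region has local angular measure less than $\pi$ and therefore $y$ as a vertex. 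By Fact~\ref{fact:common_vertices}(i) no further piece from the class of $A$ can share the vertex $y$, so all of them belong to the class of $B$; the same fact applied within that class, combined with the observation that $y$ is not a vertex of $B$, leaves a single piece $C\ne B$ that fills the whole complementary region. Then $y$ is a joint vertex of the $P$-piece and the $Q$-piece in $\{A,C\}$ and simultaneously lies in the relative interior of a side of the third piece $B\in\{P_i,Q_j\}$, which is exactly the definition of a T-vertex with $B$ above.

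The principal obstacle is this last step: the pieces $A$, $B$, and $C$ may well be disconnected, and several wedges could a priori gather at $y$, but the repeated use of Fact~\ref{fact:common_vertices}(i) collapses all the possibilities to the single piece $C$ and yields the claimed T-vertex structure.
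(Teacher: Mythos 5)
Your proof is correct and follows essentially the same route as the paper's: Fact~\ref{fact:common_vertices}(ii) forces $y$ to be a vertex of exactly one of $P_i$, $Q_j$ and an interior point of a side of the other, after which the boundary case yields $y=v_1$ and the interior case produces the third piece making $y$ a T-vertex with that side's piece above. You merely spell out the local angular bookkeeping around $y$ that the paper leaves implicit.
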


\begin{proof}
Suppose that $y$ is a vertex of $P_i$, w.l.o.g. By Fact~\ref{fact:common_vertices}(ii), $y$ is no vertex of $Q_j$. So $y$ is in the relative interior of a side of $Q_j$.

If $y$ is in the interior of $T$, Fact~\ref{fact:common_vertices} implies that $y$ is a vertex of some $Q_{j'}$, $j'\ne j$. Then $y$ is a T-vertex with $P_i$ and $Q_{j'}$ below and $Q_j$ above.

Now let $y \in \BD(T)$. If $y \ne v_1$ then $y$ had to be be a vertex of both $P_i$ and $Q_j$, since $v_1$ is the only non-convex vertex of $T$. This contradiction shows that $y=v_1$.
\end{proof}

Next we define a particular arc $\Delta= a_0a_1\ldots a_l$ in the skeleton $S$ as a union of primal segments (see Figure~\ref{fig:arc}). We know from Fact~\ref{fact:start(II)} that, when starting in $v_1$ and following $\BD(T)$ counter-clockwise, we meet a first point $a_0 \in \BD(T) \setminus \{v_1\}$ where we switch from $P$ to $Q$. There exist (by Fact~\ref{fact:common_vertices}(i) unique) $1 \le i_1 \le m$ and $1 \le j_1 \le n$ such that $a_0$ is a vertex of both $P_{i_1}$ and $Q_{j_1}$. There is (at least) one primal segment $a_0a_1 \subseteq P_{i_1} \cap Q_{j_1}$ such that $P_{i_1}$ and $Q_{j_1}$ are left and right beside $a_0a_1$, respectively. This segment is the beginning of $\Delta$.

Now we continue the definition of $\Delta$ recursively. Suppose that $a_{r-1}a_r \subseteq P_{i_r} \cap Q_{j_r}$ is the last chosen segment in $\Delta$. By Fact~\ref{fact:primal}, we can continue as follows.
\begin{itemize}
\item
If $a_r=v_1$, we put $l=r$ and $\Delta$ is complete.
\item
If $a_r$ is a T-vertex with $P_{i_r}$ above, there is a (by Fact~\ref{fact:common_vertices}(i) unique) $1 \le i_{r+1} \le m$ such that $P_{i_{r+1}}$ is below $a_r$. We put $j_{r+1}=j_r$ and continue $\Delta$ with a next primal segment $a_r a_{r+1} \subseteq P_{i_{r+1}} \cap Q_{j_{r+1}}$ having $P_{i_{r+1}}$ on the left and $Q_{j_{r+1}}$ on the right.
\item
If $a_r$ is a T-vertex with $Q_{j_r}$ above, we proceed analogously
and obtain a next primal segment $a_r a_{r+1} \subseteq P_{i_{r+1}} \cap Q_{j_{r+1}}$ having $P_{i_{r+1}}$ on the left and $Q_{j_{r+1}}$ on the right, where $i_{r+1}=i_r$.
\end{itemize}

Note that this definition terminates after finitely many steps, since it produces no loops. Indeed, suppose that $x_0$ would be a first point of self-intersection. Then $x_0$ cannot be in the relative interior of a primal segment, because these do not contain vertices. Hence $x_0$ is a T-vertex with $x_0=a_r=a_{r'}$, $r<r'$, and the primal segments $a_{r-1}a_r$ and $a_{r'-1}a_{r'}$ arrive at $x_0$ from opposite directions. This is impossible, because both segments have $P$ on the left and $Q$ on the right.

In the following we use the notation $\widearc{xy}$, $x,y \in \BD(T)$, for the counter-clockwise arc in $\BD(T)$ that begins in $x$ and ends in $y$.

\begin{fact}\label{fact:switch}
When following $\BD(T)$ counter-clockwise, $a_0$ is the only point where one switches from $P$ to $Q$. 
Accordingly, there exists exactly one point $b_0 \in \widearc{a_0 v_1} \setminus \{a_0\}$ where one switches back from $Q$ to $P$.
\end{fact}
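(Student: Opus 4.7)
The plan is to bound the number of boundary switches on $\BD(T)\setminus\{v_1\}$ by exhibiting a natural injection from such switches to primal segments emanating from $v_1$ into $\mathrm{int}(T)$, and by showing that at most two such primal segments exist.

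To count the primal segments at $v_1$, I would note that at most three pieces meet there: $P_1$ (by Fact~\ref{fact:start(II)}(i)); at most one $Q$-piece with $v_1$ as a vertex (by Fact~\ref{fact:common_vertices}(i)); and at most one further piece having $v_1$ in the relative interior of one of its sides, as such a piece occupies angular extent $\pi$ at $v_1$, and the interior angle of $T$ at $v_1$ is at most $7\pi/4<2\pi$. Arranged angularly in the wedge at $v_1$, these at most three pieces admit at most two pairwise adjacencies, yielding at most two primal segments emanating from $v_1$ into $\mathrm{int}(T)$.

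For the injection, every boundary switch on $\BD(T)\setminus\{v_1\}$ starts a primal path constructed exactly as $\Delta$---with $P$ on the left for a $P\to Q$ switch, or $P$ on the right for a $Q\to P$ switch. By Fact~\ref{fact:primal} and the no-loops argument already used for $\Delta$, each such path terminates at $v_1$ and hence ends with a primal segment at $v_1$. A brief case analysis of the four local configurations of a T-vertex (according to whether the ``above'' piece is of type $P$ or $Q$, and the left/right arrangement of the two ``below'' pieces) shows that the ``$P$ on the left'' convention uniquely determines both the incoming and outgoing primal segment at each T-vertex; extending both forward and backward, two primal paths sharing any T-vertex must coincide entirely and in particular share their starting boundary switch. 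Hence the map from boundary switches to terminating primal segments at $v_1$ is injective.

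Combining, at most two boundary switches lie on $\BD(T)\setminus\{v_1\}$. If $a_0$ were not the unique $P\to Q$ switch, then a second $P\to Q$ switch $a_0'\in\widearc{a_0 v_1}$ would exist, and by the alternation of switch types along $\BD(T)$ at least one $Q\to P$ switch would lie between $a_0$ and $a_0'$, producing three boundary switches on $\BD(T)\setminus\{v_1\}$---a contradiction. So $a_0$ is the only $P\to Q$ switch; alternation of switch types, together with the fact that traversing $\BD(T)$ counter-clockwise starts in $P_1$ and eventually returns to $P_1$, then forces the existence of exactly one $Q\to P$ switch $b_0\in\widearc{a_0 v_1}\setminus\{a_0\}$ (possibly $b_0=v_1$). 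The most delicate point will be the verification of uniqueness of primal-path continuation at T-vertices.
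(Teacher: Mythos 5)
Your overall strategy---send every boundary switch along a primal path to $v_1$ and then count what can arrive there---is close in spirit to the paper's proof, which sends only the $P$-to-$Q$ switches to $v_1$ and shows that the backward reconstruction of such a path from $v_1$ is deterministic. However, your key counting step has a genuine gap. You bound the number of primal segments at $v_1$ by arguing that at most three pieces meet $v_1$ and that three pieces ``arranged angularly'' admit at most two pairwise adjacencies. That inference presupposes that each piece occupies a single contiguous angular sector at $v_1$, which is not justified: the pieces $P_i$ and $Q_j$ are unions of basic triangles and need not be connected, let alone locally connected at $v_1$. For instance, nothing you have said excludes a wedge at $v_1$ whose successive $\frac{\pi}{4}$-sectors alternate between $P_1$ and a single $Q_j$ (both having $v_1$ as a vertex, which Fact~\ref{fact:common_vertices}(ii) permits), producing many $P$--$Q$ adjacencies at $v_1$. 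So ``at most two pairwise adjacencies'' does not follow from ``at most three pieces.''

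The count can be repaired, but only by applying Fact~\ref{fact:primal} at the target of your injection rather than counting raw adjacencies: a primal segment along which a path \emph{terminates} at $v_1$ must have $v_1$ in the relative interior of a side of one of its two flanking pieces, and there is at most one piece $R_0$ with that property; since $R_0$ is locally a half-disc at $v_1$, it contributes exactly two bounding rays, and the left/right convention assigns each ray to at most one of the two switch types. (The spurious adjacencies in the example above cannot terminate a path: there $v_1$ would be the unique common vertex of the two flanking pieces, so no other point of such an edge is a joint vertex.) This is precisely the ingredient the paper uses---its backward reconstruction begins by identifying the unique tile having $v_1$ in the relative interior of a side, which pins down the final segment of $\Delta$ and makes the whole path, hence $a_0$, unique. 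By contrast, the uniqueness of continuation at T-vertices, which you flag as the delicate point, is handled correctly and matches the paper's no-loop and backward-reconstruction argument, and your derivation of the statement about $b_0$ from the uniqueness of $a_0$ is fine.
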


\begin{proof}
First note that we can recover $\Delta$ uniquely by the following backwards procedure: Applying Fact~\ref{fact:primal} to the last segment $a_{l-1}a_l=a_{l-1}v_1$ of $\Delta$, we see that $v_1$ is in the relative interior of a side of one of $P_{i_l}$ or $Q_{j_l}$. Since there is at most one of the $m+n$ tiles from \eqref{eq:dissection} having $v_1$ in the relative interior of one of its sides, this determines that very tile uniquely. Moreover, since $P$ is on the left and $Q$ is on the right of $\Delta$, this recovers at least a little segment at the end of $\Delta$. Now we find $a_{l-1}$ by following that direction until we end at a T-vertex (or at $\BD(T)$, where we meet $a_0$). The polygon above that T-vertex $a_{l-1}$ is $P_{i_{l-1}}$ or $Q_{j_{l-1}}$. Again by using the lateral position of $P$ and $Q$ relative to $\Delta$ we determine the direction of the segment $a_{l-2} a_{l-1}$. Following that direction until the next T-vertex (or until $\BD(T)$, where we find $a_0$) we find $a_{l-2}$. Continuing this way we reproduce $\Delta$ uniquely.

Now, for proving the fact, let $a_0' \in \BD(T)$ be a point where one switches from $P$ to $Q$. Starting from $a_0'$, we define an arc $\Delta' \subseteq S$ from $a_0'$ to $v_1$ by the same rules as we did with $\Delta$ from $a_0$. We can reproduce $\Delta'$ beginning at its end $v_1$, as we did with $\Delta$. But this reproduction gives the same arc $\Delta'=\Delta$, so that $a_0'=a_0$.
\end{proof}

\begin{fact}\label{fact:piece_at_v1}
Let $R_0$ be the piece among $P_1,\ldots,P_m,Q_1,\ldots,Q_n$ such that the segment $v_kv_1$ ends in $R_0$. Then $v_1$ is in the relative interior of a side of $R_0$.
\end{fact}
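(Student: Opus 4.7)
The plan is to argue by contradiction: suppose that $v_1$ is a vertex of $R_0$ rather than in the relative interior of one of its sides.

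I first show that $R_0\in\{P_1,\ldots,P_m\}$. By Fact~\ref{fact:switch} the arc $\widearc{b_0v_1}$ lies in $P$, and by the very definition of $R_0$ the last segment of $v_kv_1$ at $v_1$ belongs to $R_0$. Hence the reversed arc $\widearc{v_1b_0}$ starts in $R_0$, contains $v_1$ only as its endpoint, and is a subset of $\BD(T)\cap P$. Fact~\ref{fact:arc_in_Pi} therefore gives $\widearc{v_1b_0}\subseteq R_0$, so $R_0$ is a $P$-piece.

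Combined with Fact~\ref{fact:start(II)}(i), which asserts that $v_1$ is already a vertex of $P_1$, the contradiction hypothesis makes $v_1$ a common vertex of the two $P$-pieces $R_0$ and $P_1$. Fact~\ref{fact:common_vertices}(i) then forces $R_0=P_1$, so both sides $v_1v_2$ and $v_1v_k$ of $T$ start in $P_1$, and $P_1$ touches $\BD(T)$ along both arcs $\widearc{v_1a_0}$ and $\widearc{b_0v_1}$.

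The main obstacle is to rule out this configuration $R_0=P_1$. My plan here is to exploit the terminal primal segment $a_{l-1}v_1$ of $\Delta$: by Fact~\ref{fact:primal} one of $P_{i_l}$ or $Q_{j_l}$ has $v_1$ as a vertex while the other has $v_1$ in the relative interior of a side. Using the observation recalled in the proof of Fact~\ref{fact:switch} that at most one tile of the dissection has $v_1$ in the relative interior of one of its sides, together with Fact~\ref{fact:common_vertices}, the argument splits into two sub-cases according to how $P_1$ sits at $v_1$. If $P_1$ fills the whole reflex angle of $T$ at $v_1$, then no $Q$-piece can meet $v_1$, and $\Delta$ cannot terminate there, a contradiction. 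In the harder sub-case, $P_1$ occupies $v_1$ by two or more disjoint wedges; here I would mirror the construction of $\Delta$ starting from $b_0$ (with $Q$ on the left and $P$ on the right) to obtain a second arc terminating at $v_1$ whose last primal segment forces a second tile to have $v_1$ in the relative interior of a side, contradicting the uniqueness just recalled. Making this mirrored-arc argument rigorous, while keeping track of the $\pi/4$ rotation between the $P$- and $Q$-lattices, is the delicate point.
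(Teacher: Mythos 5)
Your opening step --- that $R_0$ must be one of the $P_i$ --- does not hold up, and the case you discard there is precisely the one the paper must (and does) treat separately. Fact~\ref{fact:switch} only provides a switch point $b_0\in\widearc{a_0v_1}\setminus\{a_0\}$; nothing at this stage excludes $b_0=v_1$. If $b_0=v_1$, the arc $\widearc{b_0v_1}$ is degenerate, Fact~\ref{fact:arc_in_Pi} cannot be applied to it, and the terminal portion of $\BD(T)$ before $v_1$ lies in $Q$, so $R_0$ is a $Q$-piece. This is not a marginal possibility: Fact~\ref{fact:end_Delta} later shows that in the actual configuration one has $b_0=v_1$ and $R_0=Q_{j_l}$. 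The paper therefore argues by two cases. In the case $R_0=Q_{j^*}$, the segment $v_kv_1$ ending in a $Q$-piece forces $b_0=v_1$, and applying Fact~\ref{fact:arc_in_Pi} to $\widearc{v_1a_0}\subseteq P$ and $\widearc{a_0v_1}\subseteq Q$ exhibits $v_1$ and $a_0$ as two distinct common vertices of $P_1$ and $Q_{j^*}$ (using the contradiction hypothesis that $v_1$ is a vertex of $R_0$), which contradicts Fact~\ref{fact:common_vertices}(ii). Your proposal never closes this branch.

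In the branch you do address ($R_0=P_{i^*}$), your reduction to $R_0=P_1$ via Fact~\ref{fact:common_vertices}(i) matches the paper, but the contradiction you then pursue is both heavier than necessary and left unfinished: you yourself flag the ``mirrored arc'' in the multi-wedge sub-case as the delicate unproved point, and that is where the real work would lie. The paper needs none of this machinery. It applies Fact~\ref{fact:arc_in_Pi} to the three arcs $\widearc{v_1a_0},\widearc{b_0v_1}\subseteq P$ and $\widearc{a_0b_0}\subseteq Q$, concluding that $a_0$ and $b_0$ are both vertices of $P_1$ and both vertices of a single $Q_j$; since $b_0\ne a_0$ by Fact~\ref{fact:switch}, this already contradicts Fact~\ref{fact:common_vertices}(ii). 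I recommend replacing your termination analysis of $\Delta$ by this three-arc argument and adding the missing $Q$-case.
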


\begin{proof}
Assume that our claim fails. Then $v_1$ is a vertex of $R_0$.

\emph{Case 1: $R_0=P_{i^*}$ for some $1 \le i^* \le m$. } Fact~\ref{fact:common_vertices}(i) yields $i^*=1$, since $v_1$ is a vertex of both $P_1$ and $P_{i^*}$. Applying Fact~\ref{fact:arc_in_Pi} to the arcs $\widearc{v_1a_0},\widearc{b_0v_1} \subseteq P$, we see that $a_0$ and $b_0$ are vertices of $P_1$. Similarly, applying Fact~\ref{fact:arc_in_Pi} to $\widearc{a_0b_0} \subseteq Q$, we see that both $a_0$ and $b_0$ are vertices of some $Q_j$. This contradicts Fact~\ref{fact:common_vertices}(ii).

\emph{Case 2: $R_0=Q_{j^*}$ for some $1 \le j^* \le n$. } Now $b_0=v_1$, since we switch from $Q_{j*}$ to $P_1$ in $v_1$. Applications of Fact~\ref{fact:arc_in_Pi} to $\widearc{v_1 a_0} \subseteq P$ and $\widearc{a_0 v_1} \subseteq Q$ shows that $v_1$ and $a_0$ are joint vertices of both $P_1$ and $Q_{j^*}$, again contradicting Fact~\ref{fact:common_vertices}(ii).
\end{proof}

Now we learn more on the end of $\Delta=a_0a_1\ldots a_l$.

\begin{fact}\label{fact:end_Delta}
The points $a_{l-1}$, $a_l=v_1$ and $v_k$ are collinear. The vertex $v_1$ is in the relative interior of a side of $Q_{j_l}$.
In particular, $b_0=v_1$.
\end{fact}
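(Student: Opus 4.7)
The plan is to apply Fact~\ref{fact:primal} to the last primal segment $a_{l-1}v_1$, which leaves two mutually exclusive cases: \emph{(I)} $v_1$ lies in the relative interior of a side of $P_{i_l}$, or \emph{(II)} $v_1$ lies in the relative interior of a side of $Q_{j_l}$. I will rule out (I) by a wedge count at $v_1$; case (II) will then deliver all three conclusions at once. As preparation I sharpen Fact~\ref{fact:piece_at_v1} geometrically: the side $\sigma_0$ of $R_0$ containing $v_1$ in its relative interior is a straight segment that must extend past $v_1$, and this extension cannot leave $T$ (as $R_0\subseteq T$) nor coincide with the edge $v_1v_2$ (which would force $v_1$ to be a flat vertex of $T$, contradicting its non-convexity). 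Hence $\sigma_0$ lies on the straight line through $v_k$ and $v_1$, and $R_0$ occupies a half-plane wedge of angle $\pi$ at $v_1$ bounded by this line.

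The core is a wedge count. Write $\alpha\in(\pi,2\pi)$ for the inner angle of $T$ at $v_1$. In either case the piece whose side contains $v_1$ in its interior also occupies a half-plane wedge of angle $\pi$ at $v_1$, bounded by the line carrying the segment $a_{l-1}v_1$. Two such wedges of total angular measure $2\pi$ cannot lie with disjoint interiors inside the wedge of $T$ at $v_1$, which has measure $\alpha<2\pi$, so they must overlap in positive area and, by the dissection property, coincide with $R_0$. Equality of the wedges then forces their bounding lines to agree: the line carrying $a_{l-1}v_1$ is the line through $v_k$ and $v_1$, yielding the collinearity of $a_{l-1}$, $v_1$, $v_k$.

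To rule out case (I): there $R_0=P_{i_l}$ would be a $P$-piece, and an orientation check (using that $P_{i_l}$ sits on the left of $a_{l-1}v_1$ while $R_0$ sits on the interior-of-$T$ side of $v_kv_1$ oriented from $v_k$ to $v_1$) shows that $a_{l-1}$ must lie on the segment $[v_1,v_k]\subseteq\BD(T)$. Since T-vertices are interior points of $T$, this is possible only if $l=1$ and $a_0\in(v_1,v_k]$; but then the arc $\widearc{a_0v_1}$ is a subsegment of the edge $v_kv_1$ lying in $Q$ by Fact~\ref{fact:switch}, so the piece $R_0$ adjacent to its interior from inside $T$ is a $Q$-piece, contradicting $R_0=P_{i_l}$.

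Hence case (II) holds, giving the second conclusion directly, and the wedge argument already provides the first conclusion. Finally, $R_0=Q_{j_l}$ is a $Q$-piece, and combined with $\widearc{v_1a_0}\subseteq P_1$ from Fact~\ref{fact:start(II)}, this forces the unique switch from $Q$ back to $P$ along $\BD(T)$ to occur at $v_1$, yielding the third conclusion $b_0=v_1$. The main difficulty lies in the orientation bookkeeping inside the wedge count: ensuring that the two half-plane wedges really must overlap (via $\alpha<2\pi$) and correctly identifying on which side of the separating line each piece lies, so that case (I) yields $a_{l-1}\in[v_1,v_k]$ rather than the opposite ray.
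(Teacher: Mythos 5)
Your overall route is the same as the paper's: combine Fact~\ref{fact:primal} (applied to the last primal segment) with Fact~\ref{fact:piece_at_v1}, observe that at most one piece can have $v_1$ in the relative interior of a side because each such piece fills a half-disc at $v_1$ while the inner angle of $T$ there is less than $2\pi$, deduce the collinearity, and then use the left/right position of $P_{i_l}$ and $Q_{j_l}$ relative to $a_{l-1}v_1$ to identify the distinguished piece as $Q_{j_l}$. The paper simply performs the orientation step directly instead of splitting into your cases (I) and (II). Your wedge count, the identification of the two bounding lines, and the final deduction of $b_0=v_1$ from Fact~\ref{fact:switch} are all sound.

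The one step that does not hold as written is the last link in your exclusion of case (I). Having reduced to $l=1$ and $a_0\in(v_1,v_k]$, you claim that $\widearc{a_0v_1}\subseteq Q$ ``by Fact~\ref{fact:switch}''. That fact only guarantees a unique switch point $b_0\in\widearc{a_0v_1}\setminus\{a_0\}$ from $Q$ back to $P$; it does not place $b_0$ at $v_1$, so the terminal portion $\widearc{b_0v_1}$ of the arc could a priori lie in $P$ --- which is exactly what $R_0=P_{i_l}$ would require. In effect you are assuming $b_0=v_1$, part of the very conclusion being proved. The sub-case is nevertheless impossible, for a more elementary reason already contained in your orientation bookkeeping: if $a_0\in(v_1,v_k]$, the primal segment $a_0v_1$ is a subsegment of the edge $v_kv_1\subseteq\BD(T)$ and is directed the same way as $v_k\to v_1$; since $Q_{j_1}$ lies on the right of $a_0\to v_1$, and the right of $v_k\to v_1$ is the exterior of $T$, the tile $Q_{j_1}\subseteq T$ would have to lie outside $T$. (Equivalently: a primal segment has tiles on both sides and therefore cannot lie on $\BD(T)$.) With this substitution, and a word on why $a_{l-1}$ cannot lie beyond $v_k$ on the ray from $v_1$ through $v_k$ (the segment $a_{l-1}v_1\subseteq T$ would have to pass the convex vertex $v_k$, where that line leaves $T$), your argument is complete and agrees with the paper's.
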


\begin{proof}
There is at most one piece among $P_1,\ldots,P_m,Q_1,\ldots,Q_n$ having $v_1$ in the relative interior of a side. By Fact~\ref{fact:piece_at_v1}, such a piece $R_0$ exists and $v_1v_k$ represents the direction of that side. On the other hand,
application of Fact~\ref{fact:primal} to the last primal segment $a_{l-1}a_l=a_{l-1}v_1$ of $\Delta$ shows that one of $P_{i_l}$ or $Q_{j_l}$ has $v_1$ in the relative interior of a side. Consequently, $R_0 \in \left\{P_{i_l},Q_{j_l}\right\}$ and $a_{l-1}$, $a_l=v_1$ and $v_k$ are collinear. Since $P_{i_l}$ is on the left of $a_{l-1}a_l$ and since $v_kv_1$ ends in $R_0$, we obtain $R_0=Q_{j_l}$. Finally, since $v_kv_1$ ends in $R_0=Q_{j_l}$, Fact~\ref{fact:switch} yields $b_0=v_1$.
\end{proof}

Figure~\ref{fig:almost_done} gives a scheme of we have reached so far.
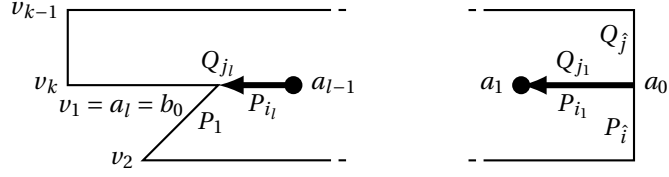
\begin{figure}
\begin{center}
\begin{tikzpicture}
\draw
  (3,2) node {\tikz
	{
\draw[thick]
	(3.7,3)--(3.6,3) (3.5,3)--(0,3)--(0,2)--(2,2)--(1,1)--(3.5,1) (3.6,1)--(3.7,1) 
	(.7,2) node[below] {$v_1=a_l=b_0$}
	(1,1) node[left] {$v_2$}
	(0,2) node[left] {$v_k$}
	(0,3) node[left] {$v_{k-1}$}
	(1.9,1.5) node {$P_1$}
	(2.6,2) node[below] {$P_{i_l}$}
	(2,2) node[above] {$Q_{j_l}$}
	(3.1,2) node[right] {$a_{l-1}$}
	;
\draw[line width=.8mm,-latex]
  (3,2)--(2,2)
  ;
\fill
  (3,2) circle (1.2mm)
  ;
	}}
	;
\fill
  (7.5,2) circle (1.2mm)
	;
\draw[line width=.8mm,-latex]
  (9,2)--(7.5,2)
  ;
\draw[thick]
	(6.8,3)--(6.9,3) (7,3)--(9,3)--(9,1)--(7,1) (6.9,1)--(6.8,1)
  (9,2) node[right] {$a_0$}
  (7.4,2) node[left] {$a_1$}
  (8.2,2) node[below] {$P_{i_1}$}	
  (8.2,2) node[above] {$Q_{j_1}$}	
  (9.05,1.4) node[left] {$P_{\hat{i}}$}	
  (9.05,2.6) node[left] {$Q_{\hat{j}}$}	
	;
\end{tikzpicture}
\end{center}
\caption{The situation after Fact~\ref{fact:end_Delta}.
\label{fig:almost_done}}
\end{figure}


\subsection{Conclusion}

The counter-clockwise arcs $\widearc{v_1a_0},\widearc{a_0v_1} \subseteq \BD(T)$ are contained in $P$ and $Q$, respectively. Thus there are $1 \le \hat{i} \le m$, $1 \le \hat{j} \le n$ such that $\widearc{v_1a_0}$ ends in $P_{\hat{i}}$ and $\widearc{a_0v_1}$ begins in $Q_{\hat{j}}$. Since $a_0$ is a vertex of $P_{\hat{i}}$, $P_{i_1}$, $Q_{\hat{j}}$ and $Q_{j_1}$, Fact~\ref{fact:common_vertices}(i) yields $P_{\hat{i}}=P_{i_1}$ and $Q_{\hat{j}}=Q_{j_1}$. Similarly, $P_{i_l}=P_1$, because of the joint vertex $a_l=v_1$.
Moreover, we apply Fact~\ref{fact:arc_in_Pi} to $\widearc{v_1a_0} \subseteq P$ and $\widearc{a_0v_1}\subseteq Q$ and obtain
\begin{equation}\label{eq:final_cut}
\widearc{v_1a_0} \subseteq P_{i_1}=P_{\hat{i}}=P_1=P_{i_l},\qquad
\widearc{a_0v_1} \subseteq Q_{j_1}=Q_{\hat{j}}=Q_{j_l}.
\end{equation}
In particular, both $a_0$ and $a_{l-1}$ are joint vertices of $P_{i_1}=P_{i_l}$ and $Q_{j_1}=Q_{j_l}$. Then Fact~\ref{fact:common_vertices}(ii) implies $a_0=a_{l-1}$, and in turn $l=1$. That is, the arc $\Delta$ consists of the single segment $\Delta=a_0a_1=a_0v_1$.

We see that $\Delta$ dissects $T$ along the straight line spanned by $v_1v_k$ into two convex generalized tangrams. By \eqref{eq:final_cut}, all vertices of one of them are vertices of $P_{i_1}$ and all vertices of the other one are vertices of $Q_{j_1}$. This completes the proof of Lemma~\ref{lem}.


\section{The complete list of pentagonal tangrams\label{sec:pentagons}}

In this section we prove Theorem~\ref{thm:pentagons}. We distinguish the cases of convex and non-convex tangrams, the latter case being split into the two alternatives according to Lemma~\ref{lem}.


\subsection{Convex pentagons}

\begin{proposition}
There exist, up to isometry, exactly two convex pentagonal tangrams.
\end{proposition}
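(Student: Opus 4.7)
The plan is short because the proposition is already subsumed by Theorem~\ref{thm:convex}: Wang and Hsiung's classification explicitly lists exactly two pentagons among the thirteen convex tangrams, and both appear with their dissections in Figure~\ref{fig:convex}. Thus one legitimate proof is a one-line citation of Theorem~\ref{thm:convex} together with a pointer to the two explicit diagrams, which simultaneously confirms existence and uniqueness up to isometry.

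If instead one wants a self-contained argument in the spirit of Proposition~\ref{prop:34}, I would proceed as follows. Let $P$ be a convex pentagonal tangram. By Lemma~\ref{lem:trivial}(ii) and convexity, each inner angle of $P$ lies in $\{\tfrac{\pi}{4}, \tfrac{\pi}{2}, \tfrac{3\pi}{4}\}$. Writing $a, b, c$ for the number of angles of each respective size, the conditions $a+b+c=5$ and $a+2b+3c=12$ (the latter coming from the angle sum $3\pi$) reduce to $b+2c=7$, whose only nonnegative integer solutions with $a \ge 0$ are $(a,b,c)=(0,3,2)$ and $(a,b,c)=(1,1,3)$. So the angle multiset of $P$ is one of two explicit lists.

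For each of these two angle sequences I would then restrict the side lengths of $P$ to the additive semigroup $\{k+\ell\sqrt{2} : k,\ell \in \{0,1,\ldots\}\}$, exactly as in \eqref{eq:4representation}, impose the closing condition that the five directed edge vectors sum to zero, and impose the area constraint $8$ from Lemma~\ref{lem:trivial}(i). This yields a small finite Diophantine-style system that can be solved by hand and turns out to admit a unique solution in each case, matching precisely the two pentagons displayed in Figure~\ref{fig:convex}. The main obstacle is the case enumeration: one must use the area bound to cut down the a priori infinite family of candidate side-length tuples before arriving at the unique admissible pentagon per angle sequence, and then exhibit the explicit dissection into the seven tans to verify that each candidate is indeed a tangram.
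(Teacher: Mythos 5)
Your first paragraph is exactly the paper's proof: the proposition is deduced directly from Theorem~\ref{thm:convex}, with Figure~\ref{fig:convex} exhibiting the two convex pentagons and their dissections. The additional self-contained angle-and-side-length sketch is plausible but unnecessary, and the paper does not carry it out.
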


This follows from Theorem~\ref{thm:convex}, see Figure~\ref{fig:convex} for an illustration.


\subsection{Non-convex lattice pentagons}

Let a simple non-convex pentagon $T$ be a tangram. Since the sizes of its inner angles are integer multiples of $\frac{\pi}{4}$ (see Lemma~\ref{lem:trivial}(ii)) and add up to $3\pi$, $T$ has only one non-convex vertex. Thus $T$ satisfies one of the situations (I) or (II) from Lemma~\ref{lem}. In the present subsection we assume that (I) applies.  

The following discussion seems to go back to Read (published in \cite{gardner1974}) and, independently, to Heinert \cite{heinert1996}. We reproduce it here, since the publication \cite{gardner1974} misses six of the $20$ solutions (see \cite[p.\ 125]{gardner1974b}) and since Heinert's manuscript \cite{heinert1996} is almost inaccessible.
Note that Read and Heinert assumed their tangrams a priorily to be lattice tangrams. We use only the situation of Lemma~\ref{lem}(I), i.e., that the boundary of $T$ is arranged along a lattice.

The inner angles of $T$ have sizes $\alpha_i=k_i\frac{\pi}{4}$ with $k_1 \in \{5,6,7\}$ (non-convex vertex) and $k_i\in \{1,2,3\}$, $2 \le i \le 5$ (convex vertices). Their sum is $\alpha_1+\ldots+\alpha_5=3\pi$, i.e., $k_1+\ldots+k_5=12$. Hence the multi-set $\{k_1,\ldots,k_5\}$ is one of $\{5,3,2,1,1\}$, $\{5,2,2,2,1\}$, $\{6,3,1,1,1\}$, $\{6,2,2,1,1\}$ or $\{7,2,1,1,1\}$. 

For fixed $\{k_1,\ldots,k_5\}$, the permutations $k_{\varrho(1)}\ldots k_{\varrho(5)}$ give the successive order of the angles along the boundary of $T$. Cyclic shifts and reversions can be considered equal, since they correspond to isometric images of $T$. Thus it remains to consider the following $16$ orders of inner angles: $53211$, $53121$, $53112$, $52311$, $52131$, $51321$, $52221$, $52212$, $63111$, $61311$, $62211$, $62121$, $62112$, $61221$, $72111$ and $71211$.

Next, for each of the $16$ possible orders $k_{\varrho(1)}\ldots k_{\varrho(5)}$, one generates all corresponding pentagons that satisfy condition (I) from Lemma~\ref{lem} and whose area is $8$ according to Lemma~\ref{lem:trivial}(i). This can be done in the lattice $\mathbb{Z}^2$. One fixes a first vertex and the direction of the first side of $T$: either $(1,0)$ or $(1,1)$, both have to be considered separately. Then the order $k_{\varrho(1)}\ldots k_{\varrho(5)}$ determines $T$ up to three integer parameters, since three side lengths fix $T$ up to isometry and the length of the $i$th side is either $s_i$ or $s_i \sqrt{2}$ with $s_i \in\{1,2,\ldots\}$. Now one can easily find the choices of side lengths that give polygons $T$ of area $8$. 

(Alternatively, one could find these non-convex pentagons $T$ of area $8$ as follows: One generates all convex generalized lattice tangrams of areas less than $8$, as it has been done in \cite{wang} for those of area $8$. Then one examines for all pairs of total area $8$ if and how they can be put together to form a non-convex lattice pentagon.)

Finally, one checks for all these resulting pentagons $T$ of area $8$ if they admit a dissection into the seven tans. This can be done by hand as well, because it turns out that such a candidate $T$ cannot be dissected only if the two large tans cannot be packed simultaneously inside $T$. 

This search results in the $20$ tangrams depicted in Figure~\ref{fig:lattice}.

\begin{figure}
\begin{center}
\begin{tikzpicture}[xscale=.4, yscale=.4]
\draw
  (0,0) node[above right] {\tikz[xscale=.4, yscale=.4]
	{
\draw[thick]
  (0,2)--(2,0)--(6,4)--(4,4)--(2,2)--cycle
	(2,0) node[below] {$53121$}
	;
\draw
  (4,4)--(4,2)--(2,2)--(2,1)--(3,2) (1,2)--(1,1)--(3,1)
  ;
	}}
	;
\draw
  (4,0) node[above right] {\tikz[xscale=.4, yscale=.4]
	{
\draw[thick]
  (0,0)--(8,0)--(7,1)--(3,1)--(2,2)--cycle
	(4,0) node[below] {$53112$}
	;
\draw
  (2,0)--(2,2) (3,1)--(4,0)--(5,1) (5,0)--(6,1)--(6,0) (7,0)--(7,1)
  ;
	}}
	;
\draw
  (12,1) node[above right] {\tikz[xscale=.4, yscale=.4]
	{
\draw[thick]
  (0,0)--(5,0)--(4,1)--(4,3)--(3,3)--cycle
	(2.5,0) node[below] {$52311$}
	;
\draw
  (2,0)--(2,2)--(4,0) (4,1)--(3,1)--(3,3) (4,3)--(3,2)--(4,2)
  ;
	}}
	;
\draw
  (18,1) node[above right] {\tikz[xscale=.4, yscale=.4]
	{
\draw[thick]
  (0,0)--(4,0)--(4,2)--(5,3)--(3,3)--cycle
	(2,0) node[below] {$52131$.a}
	;
\draw
  (2,0)--(2,2)--(4,0) (3,3)--(3,1) (4,3)--(4,2)--(3,2)--(4,1)
  ;
	}}
	;
\draw
  (22,0) node[above right] {\tikz[xscale=.4, yscale=.4]
	{
\draw[thick]
  (0,0)--(7,0)--(8,1)--(3,1)--(2,2)--cycle
	(3.5,0) node[below] {$52131$.b}
	;
\draw
  (2,0)--(2,2) (3,1)--(4,0)--(5,1) (5,0)--(6,1)--(6,0) (7,0)--(7,1)
  ;
	}}
	;
\draw
  (0,-5) node[above right] {\tikz[xscale=.4, yscale=.4]
	{
\draw[thick]
  (0,0)--(3,0)--(4,1)--(3,1)--(0,4)--cycle
	(1.5,0) node[below] {$51321$.a}
	;
\draw
  (0,2)--(2,2)--(0,0) (1,0)--(1,1)--(3,1)--(2,0)--(2,1)
  ;
	}}
	;
\draw
  (5,-5) node[above right] {\tikz[xscale=.4, yscale=.4]
	{
\draw[thick]
  (0,0)--(2,0)--(4,2)--(2,2)--(0,4)--cycle
	(1.5,0) node[below] {$51321$.b}
	;
\draw
  (0,0)--(2,2)--(2,0) (0,1)--(1,1)--(1,3) (0,2)--(1,2)--(0,3)
  ;
	}}
	;
\draw
  (10,-5) node[above right] {\tikz[xscale=.4, yscale=.4]
	{
\draw[thick]
  (0,0)--(1,0)--(4,3)--(1,3)--(0,4)--cycle
	(1.5,0) node[below] {$51321$.c}
	;
\draw
  (0,0)--(1,1)--(0,2)--(2,2)--(1,3) (1,2)--(1,1)--(2,1)--(2,3)
  ;
	}}
	;
\draw
  (15,-5) node[above right] {\tikz[xscale=.4, yscale=.4]
	{
\draw[thick]
  (0,2)--(2,0)--(5,3)--(5,5)--(2,2)--cycle
	(2,0) node[below] {$51321$.d}
	;
\draw
  (1,2)--(1,1)--(3,1)--(3,3)--(5,3) (2,2)--(2,1)--(3,2)
  ;
	}}
	;
\draw
  (21,-5) node[above right] {\tikz[xscale=.4, yscale=.4]
	{
\draw[thick]
  (0,0)--(4,0)--(2,2)--(2,3)--(0,3)--cycle
	(2,0) node[below] {$52221$}
	;
\draw
  (2,0)--(2,2)--(1,3)--(1,1) (0,2)--(1,2)--(0,1) (0,0)--(2,2)
  ;
	}}
	;
\draw
  (0,-9) node[above right] {\tikz[xscale=.4, yscale=.4]
	{
\draw[thick]
  (0,0)--(7,0)--(6,1)--(3,1)--(3,3)--cycle
	(3.5,0) node[below] {$63111$}
	;
\draw
  (2,0)--(2,2)--(4,0) (4,1)--(5,0)--(5,1) (6,0)--(6,1)
  ;
	}}
	;
\draw
  (7,-10) node[above right] {\tikz[xscale=.4, yscale=.4]
	{
\draw[thick]
  (0,0)--(3,0)--(5,2)--(4,2)--(4,4)--cycle
	(1.5,0) node[below] {$61311$.a}
	;
\draw
  (2,0)--(2,2)--(4,2)--(3,1)--(3,2) (4,1)--(2,1)--(3,0)
  ;
	}}
	;
\draw
  (12,-9) node[above right] {\tikz[xscale=.4, yscale=.4]
	{
\draw[thick]
  (0,0)--(6,0)--(7,1)--(3,1)--(3,3)--cycle
	(3,0) node[below] {$61311$.b}
	;
\draw
  (2,0)--(2,2)--(4,0)--(4,1) (5,1)--(5,0)--(6,1)
  ;
	}}
	;
\draw
  (19,-10) node[above right] {\tikz[xscale=.4, yscale=.4]
	{
\draw[thick]
  (0,0)--(5,0)--(3,2)--(5,4)--(4,4)--cycle
	(2.5,0) node[below] {$61311$.c}
	;
\draw
  (2,0)--(2,2)--(3,1)--(2,1) (3,0)--(3,3) (4,3)--(4,4)
  ;
	}}
	;
\draw
  (25,-10) node[above right] {\tikz[xscale=.4, yscale=.4]
	{
\draw[thick]
  (0,0)--(4,0)--(3,1)--(5,3)--(4,4)--cycle
	(2,0) node[below] {$62211$}
	;
\draw
  (2,0)--(2,2)--(4,2)--(4,4) (3,0)--(2,1)--(3,1)--(3,2)
  ;
	}}
	;
\draw
  (0,-14) node[above right] {\tikz[xscale=.4, yscale=.4]
	{
\draw[thick]
  (0,0)--(3,0)--(3,2)--(6,2)--(4,4)--cycle
	(1.5,0) node[below] {$62121$}
	;
\draw
  (2,0)--(2,1)--(3,0) (1,0)--(1,1)--(2,1)--(3,2)--(2,2) (4,2)--(4,4)
  ;
	}}
	;
\draw
  (7,-14) node[above right] {\tikz[xscale=.4, yscale=.4]
	{
\draw[thick]
  (0,0)--(4,0)--(4,3)--(2,1)--(0,3)--cycle
	(2,0) node[below] {$61221$}
	;
\draw
  (2,1)--(1,0)--(0,1)--(4,1) (2,0)--(3,1)--(3,0)
  ;
	}}
	;
\draw
  (12,-14) node[above right] {\tikz[xscale=.4, yscale=.4]
	{
\draw[thick]
  (0,0)--(5,0)--(3,2)--(4,2)--(4,4)--cycle
	(2.5,0) node[below] {$72111$.a}
	;
\draw
  (2,1)--(3,1)--(2,0)--(2,2)--(3,2)--(3,1)--(4,0)--(4,1)
  ;
	}}
	;
\draw
  (18,-14) node[above right] {\tikz[xscale=.4, yscale=.4]
	{
\draw[thick]
  (0,0)--(6,0)--(4,2)--(3,1)--(3,3)--cycle
	(3,0) node[below] {$72111$.b}
	;
\draw
  (1,0)--(2,1)--(2,0) (1,1)--(3,1)--(4,0)--(4,2)  (3,0)--(3,3)
  ;
	}}
	;
\draw
  (25,-14) node[above right] {\tikz[xscale=.4, yscale=.4]
	{
\draw[thick]
  (0,0)--(5,0)--(4,1)--(5,1)--(3,3)--cycle
	(2.5,0) node[below] {$71211$}
	;
\draw
  (3,0)--(3,3) (4,0)--(2,2)--(2,0) (2,1)--(4,1)
  ;
	}}
	;
\end{tikzpicture}
\end{center}
\caption{All $20$ non-convex pentagonal lattice tangrams \cite{heinert1996}.
\label{fig:lattice}}
\end{figure}
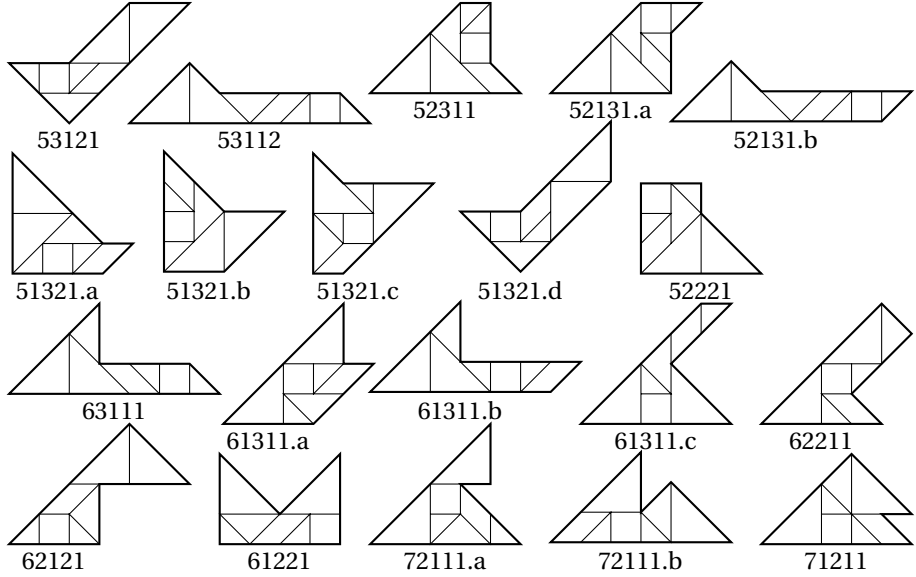

\begin{proposition}[\cite{heinert1996}]
There exist, up to isometry, exactly $20$ non-convex simple pentagons that are lattice tangrams.
\end{proposition}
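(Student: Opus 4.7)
The plan is to follow the angle-counting and lattice-enumeration strategy indicated in the preceding discussion, making each step rigorous and exhaustive. First I would apply Lemma~\ref{lem:trivial}(ii) to write the inner angles of $T$ as $\alpha_i = k_i\pi/4$ with $k_i \in \mathbb{Z}$. Since $T$ is a simple non-convex pentagon, exactly one angle exceeds $\pi$, so after relabelling $k_1 \in \{5,6,7\}$ and $k_2,\ldots,k_5 \in \{1,2,3\}$. The constraint $k_1+\cdots+k_5 = 12$ (total angle $3\pi$) together with these ranges leaves exactly the five multi-sets listed in the text, giving $16$ equivalence classes of cyclic orderings once one quotients by rotation and reflection of the vertex sequence.

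Next, for each of these $16$ admissible angle orderings, I would exploit the hypothesis that $T$ satisfies condition (I) of Lemma~\ref{lem}, placing the boundary into the standard lattice $\mathbb{Z}^2$ with all sides parallel to one of $(1,0)$, $(0,1)$, $(1,1)$, $(1,-1)$. Fixing $v_1$ at the origin and one of the two possible starting directions for the edge $v_1v_2$, the angle sequence forces the directions of all five successive edges, so $T$ is determined up to three integer side-length parameters; the last two edges are then pinned by the closure condition. Imposing the area condition $\mathrm{area}(T)=8$ from Lemma~\ref{lem:trivial}(i) via the lattice (shoelace) formula gives a Diophantine equation in these three parameters whose positive solutions can be enumerated in finitely many cases, producing a short explicit list of candidate pentagons for each ordering.

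Finally, for each candidate I would test whether the pentagon admits an honest dissection into the seven tans. The crucial observation that keeps this step tractable is that the two large triangular tans alone account for half of the area, so it suffices to enumerate the finitely many placements of these two pieces inside $T$ and then verify that the complement can be tiled by the five remaining small tans, which is a short lattice-jigsaw check. Concretely, a candidate fails if and only if the two large tans cannot be packed simultaneously inside $T$; otherwise a dissection is easily produced. The main obstacle is combinatorial bookkeeping: ensuring that the case analysis over the $16$ orderings is exhaustive, that no candidate is missed or counted twice up to isometry, and in particular that the six pentagons omitted in \cite{gardner1974} (see \cite[p.~125]{gardner1974b}) are recovered. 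I would cross-verify the output against Heinert's list \cite{heinert1996} to obtain precisely the $20$ pentagons displayed in Figure~\ref{fig:lattice}.
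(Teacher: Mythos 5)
Your proposal follows essentially the same route as the paper: the same angle-count yielding the five multi-sets and $16$ orderings, the same lattice placement with two starting directions and three integer side-length parameters constrained by the area $8$, and the same final tileability check hinging on the simultaneous packability of the two large tans. No substantive differences or gaps.
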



\subsection{Non-convex non-lattice pentagons}
Now we consider non-convex pentagonal tangrams $T$ that are in the situation of Lemma~\ref{lem}(II). Then the straight line spanned by one of the sides emanating from the non-convex vertex dissects $T$ into two generalized tangrams $T_1$ and $T_2$. The endpoint of the dissecting segment is a joint vertex of $T_1$ and $T_2$, and it is their only joint vertex, since the lattices associated to the vertices of $T_1$ and $T_2$ share at most one point. Hence the number of vertices of $T$ is the sum of those of $T_1$ and $T_2$ minus $1$ if that joint vertex is a vertex of $T$, too, or that sum minus $2$ if the joint vertex is in the relative interior of a side of $T$. We obtain the following.

\begin{fact}
W.l.o.g., each side of any tan in $T$ has one of the directions $(1,0)$, $(1,1)$, $(0,1)$ or $(-1,1)$. The pentagon $T$
(together with its dissection into tans) splits into two convex generalized tangrams $T_1$ and $T_2$. The vertices of $T_1$ belong to the lattice $\mathbb{Z}^2$, the vertices of $T_2$ belong to an image of $\mathbb{Z}^2$ under a rotation by $\frac{\pi}{4}$. The polygons $T_1$ and $T_2$ have exactly one vertex $v_0$ in common. If their angles at $v_0$ add up to $\pi$ then $T_1$ is a quadrangle and $T_2$ a triangle. Otherwise the sum of that angles is smaller than $\pi$ and both $T_1$ and $T_2$ are triangles.
\end{fact}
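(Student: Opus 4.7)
I would begin by applying Lemma~\ref{lem}(II) directly to $T$, which (since $T$ has its unique non-convex vertex $v_1$) yields convex generalized tangrams $T_1$ and $T_2$ with associated lattices $\Lambda_1$ and $\Lambda_2=R_{\pi/4}(\Lambda_1)$. After composing with a Euclidean isometry I may assume $\Lambda_1=\mathbb{Z}^2$, so that $\Lambda_2$ is a translate of $R_{\pi/4}(\mathbb{Z}^2)$. Sides of basic triangles in $T_1$ are parallel to one of the unoriented directions $(1,0)$, $(0,1)$, $(1,1)$, while those in $T_2$ are parallel to their images under $R_{\pi/4}$, which reduce to $(1,1)$, $(-1,1)$, $(0,1)$. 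The union is exactly the four directions listed, establishing the first sentence.

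Next, I would identify the common vertex. The line from Lemma~\ref{lem}(II) contains a whole side of $T$ at $v_1$, so the dissecting segment has endpoints $v_1$ and a second point $v_0\in\BD(T)$. An easy calculation shows $\mathbb{Z}^2\cap R_{\pi/4}(\mathbb{Z}^2)=\{0\}$ (a non-zero common point would have to have both integer coordinates and integer multiples of $1/\sqrt{2}$ in both coordinates), so $\Lambda_1$ and $\Lambda_2$ meet in at most one point, and therefore $T_1$ and $T_2$ share at most one vertex. The cut meets $\BD(T)$ transversally at $v_0$, so $v_0$ is an honest vertex of both $T_1$ and $T_2$. In contrast, because the cut continues one of the sides of $T$ at $v_1$ collinearly, $v_1$ is a smooth boundary point of that piece and is therefore not a vertex of it; in the other piece $v_1$ is a vertex with angle equal to the interior angle of $T$ at $v_1$ minus $\pi$. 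Hence $v_0$ is the unique common vertex.

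Finally, a vertex count completes the argument. Every vertex of $T$ other than $v_1$ lies on exactly one of $T_1,T_2$, and $v_1$ contributes to exactly one. Let $n_i$ denote the number of vertices of $T_i$. If $v_0$ lies in the relative interior of a side of $T$, then $v_0$ is not a vertex of $T$, the boundary of $T$ passes straight through $v_0$, and the angles of $T_1$ and $T_2$ at $v_0$ must sum to $\pi$; a direct count gives $5=n_1+n_2-2$, so $\{n_1,n_2\}=\{3,4\}$, and relabelling if necessary makes $T_1$ the quadrangle and $T_2$ the triangle. Otherwise $v_0$ is a vertex of $T$, the two interior angles at $v_0$ add up to the interior angle of $T$ there, which is strictly less than $\pi$ (since $v_1$ is the only non-convex vertex), and $5=n_1+n_2-1$ forces $n_1=n_2=3$. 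The delicate point, in my view, is the claim that $v_1$ is a vertex of exactly one of $T_1,T_2$ while $v_0$ is a vertex of both; both assertions rest on the collinearity supplied by Lemma~\ref{lem}(II) together with the convexity of the two pieces.
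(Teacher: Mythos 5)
Your argument matches the paper's: it invokes Lemma~\ref{lem}(II), uses the fact that $\mathbb{Z}^2$ and a translate of its image under a rotation by $\frac{\pi}{4}$ share at most one point to obtain the unique common vertex $v_0$, and finishes with the same vertex count according to whether $v_0$ is a vertex of $T$ (angle sum $<\pi$, two triangles) or lies in the relative interior of a side (angle sum $=\pi$, a triangle and a quadrangle); your extra care about why $v_1$ is a vertex of exactly one piece while $v_0$ is a vertex of both is exactly what the paper leaves implicit. One cosmetic slip: a basic triangle aligned with $\mathbb{Z}^2$ may also have its hypotenuse in direction $(1,-1)$, so each of your two direction lists should contain all four directions rather than three; the union, and hence the first sentence of the Fact, is unaffected.
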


The area of the triangle $T_2$ is larger than $0$ and smaller than $8$. All possible generalized tangrams with that property are illustrated in Figure~\ref{fig:parts3}. 
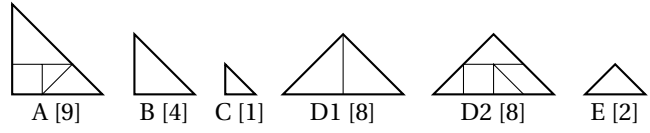
\begin{figure}
\begin{center}
\begin{tikzpicture}[xscale=.4, yscale=.4]
\draw
  (0,0) node[above right] {\tikz[xscale=.4, yscale=.4]
	{
\draw[thick]
  (0,0)--(3,0)--(0,3)--cycle
	(1.5,0) node[below] {A [9]}
	;
\draw
  (0,1)--(2,1)--(1,0)--(1,1)
  ;
	}}
	;
\draw
  (4,0) node[above right] {\tikz[xscale=.4, yscale=.4]
	{
\draw[thick]
  (0,0)--(2,0)--(0,2)--cycle
	(1,0) node[below] {B [4]}
	;
\draw
  ;
	}}
	;
\draw
  (6.5,0) node[above right] {\tikz[xscale=.4, yscale=.4]
	{
\draw[thick]
  (0,0)--(1,0)--(0,1)--cycle
	(.5,0) node[below] {C [1]}
	;
\draw
  ;
	}}
	;
\draw
  (9,0) node[above right] {\tikz[xscale=.4, yscale=.4]
	{
\draw[thick]
  (0,0)--(4,0)--(2,2)--cycle
	(2,0) node[below] {D1 [8]}
	;
\draw
  (2,0)--(2,2)
  ;
	}}
	;
\draw
  (14,0) node[above right] {\tikz[xscale=.4, yscale=.4]
	{
\draw[thick]
  (0,0)--(4,0)--(2,2)--cycle
	(2,0) node[below] {D2 [8]}
	;
\draw
  (3,0)--(2,1)--(2,0) (1,0)--(1,1)--(3,1)
  ;
	}}
	;
\draw
  (19,0) node[above right] {\tikz[xscale=.4, yscale=.4]
	{
\draw[thick]
  (0,0)--(2,0)--(1,1)--cycle
	(1,0) node[below] {E [2]}
	;
\draw
  ;
	}}
	;
\end{tikzpicture}
\end{center}
\caption{All triangular generalized lattice tangrams of area less than $8$.
\label{fig:parts3}}
\end{figure}
There they are arranged within the lattice $\mathbb{Z}^2$. In square brackets we gives the number of basic triangles that form the respective polygon, i.e., twice the area of that polygon. Figure~\ref{fig:parts3} depicts only those dissections into tans that will be used in the sequel.

Since the areas of $T_1$ and $T_2$ sum up to $8$, the area of $T_1$ must be one of $\frac{7}{2}$, $4$, $6$, $7$ or $\frac{15}{2}$. In other words, that area is represented by $7$, $8$, $12$, $14$ or $15$ basic triangles. Thus the only possible triangle representing $T_1$ can be D1 (or D2) from Figure~\ref{fig:parts3}. 

Next we obtain all quadrangular candidates for $T_1$. The sizes $k_1\frac{\pi}{4}$, $k_2\frac{\pi}{4}$, $k_3\frac{\pi}{4}$ and $k_4\frac{\pi}{4}$ of the successive inner angles of $T_2$ are given by the string $k_1k_2k_3k_4 \in \{3311,3131,3221,3212,2222\}$ w.l.o.g.\ (cf.\ the last subsection). Since the vertices and sides of $T_1$ are arranged along $\mathbb{Z}^2$, it is easy to find all possible candidates that are composed by $7$, $8$, $12$, $14$ or $15$ basic triangles. Figure~\ref{fig:parts4} depicts only those of them who can be tiled by a subfamily of all seven tans, because the others cannot represent $T_1$. Quadrangles with dotted dissections will not give rise to pentagonal tangrams in the end.
\begin{figure}
\begin{center}
\begin{tikzpicture}[xscale=.4, yscale=.4]
\draw
  (-3,4) node[right] {3311:}
  (0,1) node[above right] {\tikz[xscale=.4, yscale=.4]
	{
\draw[thick]
  (0,0)--(5,0)--(4,1)--(1,1)--cycle
	(2.5,0) node[below] {F [8]}
	;
\draw
  (1,1)--(2,0)--(2,1) (3,0)--(3,1)--(4,0)
  ;
	}}
	;
\draw
  (6,1) node[above right] {\tikz[xscale=.4, yscale=.4]
	{
\draw[thick]
  (0,0)--(5,0)--(3,2)--(2,2)--cycle
	(2.5,0) node[below] {G [12]}
	;
\draw
  (2,2)--(2,0)--(3,1)--(4,0)--(4,1) (3,2)--(3,1)--(2,1)
  ;
	}}
	;
\draw
  (11,1) node[above right] {\tikz[xscale=.4, yscale=.4]
	{
\draw[thick]
  (0,1)--(1,0)--(3,0)--(0,3)--cycle
	(2,0) node[below] {H [8]}
	;
\draw[densely dotted]
  (0,1)--(2,1)--(2,0) (1,0)--(1,1)
  ;
	}}
	;
\draw
  (13,1) node[above right] {\tikz[xscale=.4, yscale=.4]
	{
\draw[thick]
  (0,2)--(2,0)--(4,0)--(0,4)--cycle
	(3,0) node[below] {I [12]}
	;
\draw[densely dotted]
  (0,2)--(2,2)--(2,0) (2,1)--(3,1)--(3,0)
  ;
	}}
	;
\draw
  (17,1) node[above right] {\tikz[xscale=.4, yscale=.4]
	{
\draw[thick]
  (0,1)--(1,0)--(4,0)--(0,4)--cycle
	(2.5,0) node[below] {J [15]}
	;
\draw
  (0,2)--(2,2)--(2,0)--(1,1)--(0,1) (1,2)--(1,1)--(2,2)
  ;
	}}
	;
\draw
  (-3,-.2) node[right] {3131:}
  (0,-3.2) node[above right] {\tikz[xscale=.4, yscale=.4]
	{
\draw[thick]
  (0,0)--(4,0)--(5,1)--(1,1)--cycle
	(2,0) node[below] {K [8]}
	;
\draw
  (1,1)--(2,0)--(2,1) (3,1)--(3,0)--(4,1)
  ;
	}}
	;
\draw
  (5,-3.2) node[above right] {\tikz[xscale=.4, yscale=.4]
	{
\draw[thick]
  (0,0)--(2,0)--(4,2)--(2,2)--cycle
	(1,0) node[below] {L [8]}
	;
\draw
  (2,2)--(2,0)
  ;
	}}
	;
\draw
  (8,-3.2) node[above right] {\tikz[xscale=.4, yscale=.4]
	{
\draw[thick]
  (0,0)--(3,0)--(5,2)--(2,2)--cycle
	(1.5,0) node[below] {M [12]}
	;
\draw
  (2,0)--(2,2)--(3,1)--(4,2) (2,1)--(4,1) (3,0)--(3,1)
  ;
	}}
	;
\draw
  (11.7,-3.2) node[above right] {\tikz[xscale=.4, yscale=.4]
	{
\draw[thick]
  (0,0)--(2,0)--(5,3)--(3,3)--cycle
	(1,0) node[below] {N [12]}
	;
\draw[densely dotted]
  (2,0)--(2,2)--(3,2) (2,1)--(3,1)--(3,3)
  ;
	}}
	;
\draw
  (-3,-4.4) node[right] {3221:}
  (0,-7.4) node[above right] {\tikz[xscale=.4, yscale=.4]
	{
\draw[thick]
  (0,0)--(4,0)--(3,1)--(0,1)--cycle
	(2,0) node[below] {O [7]}
	;
\draw[densely dotted]
  (1,1)--(1,0)--(2,1) (2,0)--(3,1)
  ;
	}}
	;
\draw
  (5,-7.4) node[above right] {\tikz[xscale=.4, yscale=.4]
	{
\draw[thick]
  (0,0)--(3,0)--(1,2)--(0,2)--cycle
	(1.5,0) node[below] {P [8]}
	;
\draw
  (0,0)--(1,1)--(2,0) (0,1)--(2,1) (1,1)--(1,2)
  ;
	}}
	;
\draw
  (9,-7.4) node[above right] {\tikz[xscale=.4, yscale=.4]
	{
\draw[thick]
  (0,0)--(4,0)--(2,2)--(0,2)--cycle
	(2,0) node[below] {Q [12]}
	;
\draw
  (0,0)--(2,2) (1,0)--(1,1)--(3,1) (2,0)--(2,1)--(3,0)
  ;
	}}
	;
\draw
  (14,-7.4) node[above right] {\tikz[xscale=.4, yscale=.4]
	{
\draw[thick]
  (0,0)--(4,0)--(1,3)--(0,3)--cycle
	(2,0) node[below] {R [15]}
	;
\draw
  (0,0)--(2,2)--(2,0) (0,2)--(1,2)--(0,1) (1,3)--(1,1)
  ;
	}}
	;
\draw
  (16,-7.4) node[above right] {\tikz[xscale=-.4, yscale=.4]
	{
\draw[thick]
  (0,0)--(2,0)--(5,3)--(4,4)--cycle
	(1,0) node[below] {S [14]}
	;
\draw[densely dotted]
  (1,0)--(1,1)--(2,1) (2,0)--(2,2)--(4,2)--(4,4)
  ;
	}}
	;
\draw
  (-3,-8.8) node[right] {3212:}
  (0,-11.8) node[above right] {\tikz[xscale=.4, yscale=.4]
	{
\draw[thick]
  (0,0)--(3,0)--(1,2)--(0,1)--cycle
	(1.5,0) node[below] {T [7]}
	;
\draw
  (0,0)--(1,1) (1,0)--(1,2)
  ;
	}}
	;
\draw
  (4,-11.8) node[above right] {\tikz[xscale=.4, yscale=.4]
	{
\draw[thick]
  (0,0)--(4,0)--(1,3)--(0,2)--cycle
	(2,0) node[below] {U [14]}
	;
\draw[densely dotted]
  (0,1)--(1,1)--(1,0) (0,2)--(2,2)--(2,0)--cycle
  ;
	}}
	;
\draw
  (-3,-12) node[right] {2222:}
  (0,-15) node[above right] {\tikz[xscale=.4, yscale=.4]
	{
\draw[thick]
  (0,0)--(4,0)--(4,1)--(0,1)--cycle
	(2,0) node[below] {V [8]}
	;
\draw
  (0,1)--(1,0)--(2,1) (2,0)--(3,1)--(3,0)
  ;
	}}
	;
\draw
  (4.8,-15) node[above right] {\tikz[xscale=.4, yscale=.4]
	{
\draw[thick]
  (0,0)--(2,0)--(2,2)--(0,2)--cycle
	(1,0) node[below] {W [8]}
	;
\draw
  (0,0)--(2,2)
  ;
	}}
	;
\draw
  (8,-15) node[above right] {\tikz[xscale=.4, yscale=.4]
	{
\draw[thick]
  (0,0)--(3,0)--(3,2)--(0,2)--cycle
	(1.5,0) node[below] {X [12]}
	;
\draw[densely dotted]
  (0,2)--(1,1)--(1,0)--(3,2) (0,1)--(2,1)--(1,2)
  ;
	}}
	;
\draw
  (11.95,-15) node[above right] {\tikz[xscale=.4, yscale=.4]
	{
\draw[thick]
  (1,0)--(3,2)--(2,3)--(0,1)--cycle
	(1,0) node[below] {Y [8]}
	;
\draw[densely dotted]
  (0,1)--(2,1)--(2,3) (2,2)--(3,2)
  ;
	}}
	;
\draw
  (14.75,-15) node[above right] {\tikz[xscale=.4, yscale=.4]
	{
\draw[thick]
  (1,0)--(4,3)--(3,4)--(0,1)--cycle
	(1,0) node[below] {Z [12]}
	;
\draw[densely dotted]
  (0,1)--(2,1)--(2,3)--(4,3) (3,3)--(3,4)
  ;
	}}
	;
\end{tikzpicture}
\end{center}
\caption{All convex quadrangular generalized lattice tangrams of areas $\frac{7}{2}$, $4$, $6$, $7$ or $\frac{15}{2}$ that can be dissected into a subfamily of all seven tans.
\label{fig:parts4}}
\end{figure}
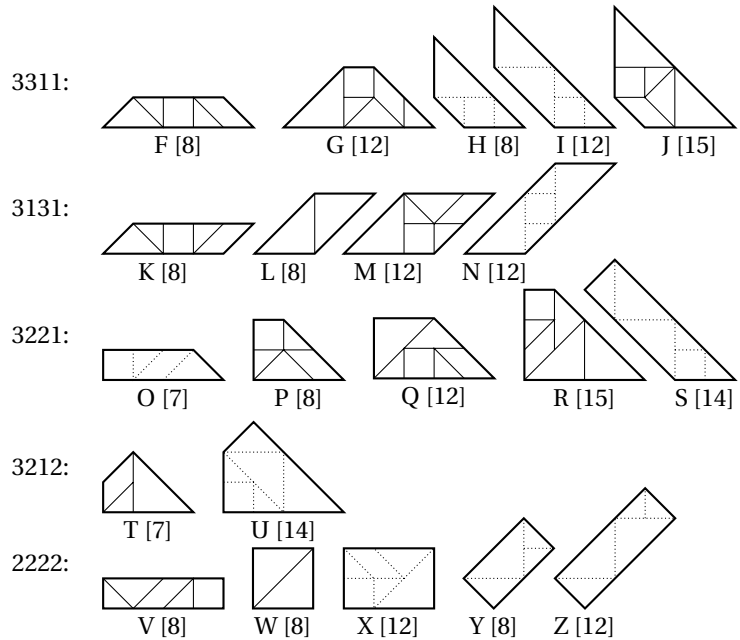

Finally, we consider every remaining candidate for $T_1$, which is either triangle D1 (or D2) from Figure~\ref{fig:parts3} or a quadrangle from Figure~\ref{fig:parts4}. We pick a triangle of the respective size from Figure~\ref{fig:parts3} as a candidate $T_2'$ for the shape of $T_2$. We check if both $T_1$ and $T_2'$ can be tiled simultaneously by all seven tans. This excludes the quadrangles H, I, N, O, S, U, Y and Z. If the tiling is possible, we choose all images $T_2$ of $T_2'$ under a rotation by an odd multiple of $\frac{\pi}{4}$ such that $T_1$ together with $T_2$ forms a dissection of a pentagon $T$. The last is impossible for the quadrangle X. This way we obtain the $31$ mutually incongruent pentagons $T$ from Figure~\ref{fig:nonlattice}. Respective dissections can be adopted from Figures~\ref{fig:parts3} and~\ref{fig:parts4}. The dissections are not unique in general.
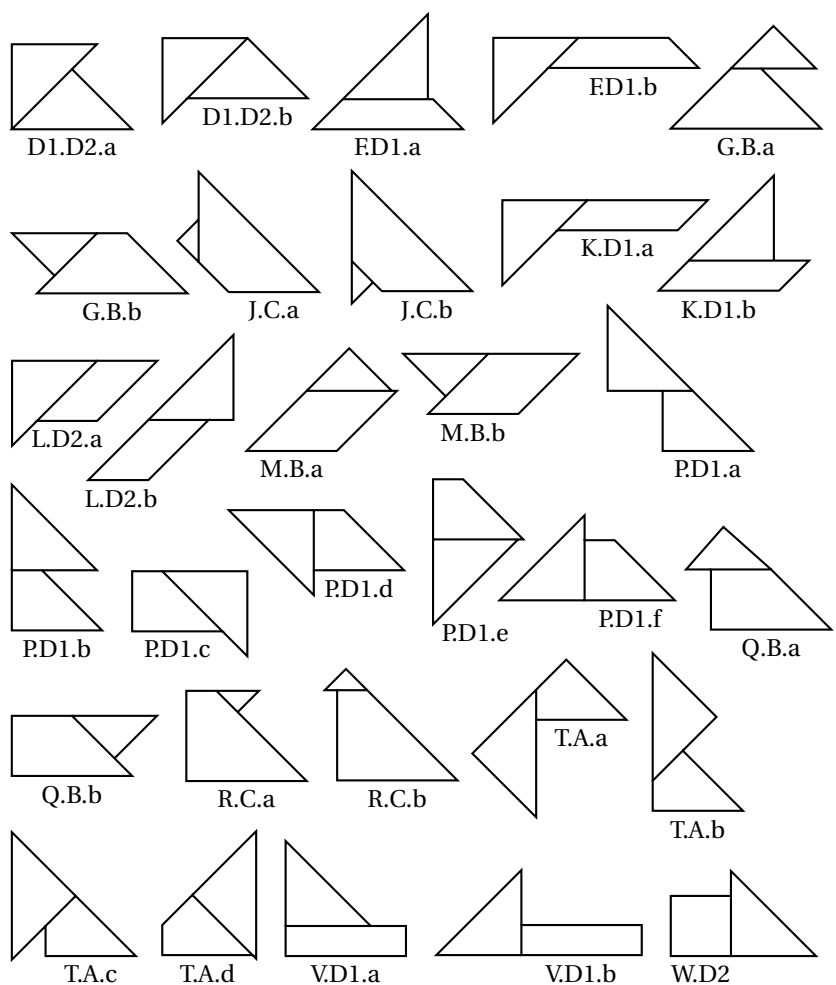
\begin{figure}
\begin{center}
\begin{tikzpicture}[xscale=.4, yscale=.4]
\draw
  (0,-.3) node[above right] {\tikz[xscale=.4, yscale=.4]
	{
\draw[thick]
  (0,0)--(0,2.828)--(2.828,2.828)--(2,2)
  (0,0)--(4,0)--(2,2)--cycle
	(2,0) node[below] {D1.D2.a}
	;
	}}
	;
\draw
  (5,.7) node[above right] {\tikz[xscale=.4, yscale=.4]
	{
\draw[thick]
  (0,0)--(-.828,-.828)--(-.828,2)--(2,2)
  (0,0)--(4,0)--(2,2)--cycle
	(2,0) node[below] {D1.D2.b}
	;
  ;
	}}
	;
\draw
  (10,-.3) node[above right] {\tikz[xscale=.4, yscale=.4]
	{
\draw[thick]
  (3.828,1)--(3.828,3.828)--(1,1)
  (0,0)--(5,0)--(4,1)--(1,1)--cycle
	(2.5,0) node[below] {F.D1.a}
	;
	}}
	;
\draw
  (16,1) node[above right] {\tikz[xscale=.4, yscale=.4]
	{
\draw[thick]
  (1,1)--(-1.828,1)--(-1.828,-1.828)--(0,0)
  (0,0)--(5,0)--(4,1)--(1,1)--cycle
	(2.5,0) node[below] {F.D1.b}
	;
	}}
	;
\draw
  (21.9,-.3) node[above right] {\tikz[xscale=.4, yscale=.4]
	{
\draw[thick]
  (3,2)--(4.828,2)--(3.414,3.414)--(2,2)
  (0,0)--(5,0)--(3,2)--(2,2)--cycle
	(2.5,0) node[below] {G.B.a}
	;
	}}
	;
\draw
  (0,-5.8) node[above right] {\tikz[xscale=.4, yscale=.4]
	{
\draw[thick]
  (2,2)--(-.828,2)--(.586,.586)
  (0,0)--(5,0)--(3,2)--(2,2)--cycle
	(2.5,0) node[below] {G.B.b}
	;
	}}
	;
\draw
  (5.5,-5.8) node[above right] {\tikz[xscale=.4, yscale=.4]
	{
\draw[thick]
  (0,1)--(-.707,1.707)--(0,2.414)
  (0,1)--(1,0)--(4,0)--(0,4)--cycle
	(2.5,0) node[below] {J.C.a}
	;
	}}
	;
\draw
  (11.3,-5.8) node[above right] {\tikz[xscale=.4, yscale=.4]
	{
\draw[thick]
  (0,1)--(0,-.414)--(.707,.293)
  (0,1)--(1,0)--(4,0)--(0,4)--cycle
	(2.5,0) node[below] {J.C.b}
	;
	}}
	;
\draw
  (16.3,-4.4) node[above right] {\tikz[xscale=.4, yscale=.4]
	{
\draw[thick]
  (0,0)--(-1.828,-1.828)--(-1.828,1)--(1,1)
  (0,0)--(4,0)--(5,1)--(1,1)--cycle
	(2,0) node[below] {K.D1.a}
	;
	}}
	;
\draw
  (21.5,-5.7) node[above right] {\tikz[xscale=.4, yscale=.4]
	{
\draw[thick]
  (1,1)--(3.828,3.828)--(3.828,1)
  (0,0)--(4,0)--(5,1)--(1,1)--cycle
	(2,0) node[below] {K.D1.b}
	;
	}}
	;
\draw
  (0,-10) node[above right] {\tikz[xscale=.4, yscale=.4]
	{
\draw[thick]
  (0,0)--(-.828,-.828)--(-.828,2)--(2,2)
  (0,0)--(2,0)--(4,2)--(2,2)--cycle
	(1,0) node[below] {L.D2.a}
	;
	}}
	;
\draw
  (2.2,-12) node[above right] {\tikz[xscale=.4, yscale=.4]
	{
\draw[thick]
  (2,2)--(4.828,4.828)--(4.828,2)--(4,2)
  (0,0)--(2,0)--(4,2)--(2,2)--cycle
	(1.1,0) node[below] {L.D2.b}
	;
	}}
	;
\draw
  (7.8,-11) node[above right] {\tikz[xscale=.4, yscale=.4]
	{
\draw[thick]
  (2,2)--(3.414,3.414)--(4.828,2)
  (0,0)--(3,0)--(5,2)--(2,2)--cycle
	(1.5,0) node[below] {M.B.a}
	;
	}}
	;
\draw
  (13,-9.8) node[above right] {\tikz[xscale=.4, yscale=.4]
	{
\draw[thick]
  (2,2)--(-.828,2)--(.586,.586)
  (0,0)--(3,0)--(5,2)--(2,2)--cycle
	(1.5,0) node[below] {M.B.b}
	;
	}}
	;
\draw
  (19.8,-11) node[above right] {\tikz[xscale=.4, yscale=.4]
	{
\draw[thick]
  (1,2)--(-1.828,4.828)--(-1.828,2)--(0,2)
  (0,0)--(3,0)--(1,2)--(0,2)--cycle
	(1.5,0) node[below] {P.D1.a}
	;
	}}
	;
\draw
  (0,-17) node[above right] {\tikz[xscale=.4, yscale=.4]
	{
\draw[thick]
  (1,2)--(2.828,2)--(0,4.848)--(0,2)
  (0,0)--(3,0)--(1,2)--(0,2)--cycle
	(1.5,0) node[below] {P.D1.b}
	;
	}}
	;
\draw
  (4,-17) node[above right] {\tikz[xscale=.4, yscale=.4]
	{
\draw[thick]
  (1,2)--(3.828,2)--(3.828,-.828)--(3,0)
  (0,0)--(3,0)--(1,2)--(0,2)--cycle
	(1.5,0) node[below] {P.D1.c}
	;
	}}
	;
\draw
  (7.2,-15) node[above right] {\tikz[xscale=.4, yscale=.4]
	{
\draw[thick]
  (0,2)--(-2.828,2)--(0,-.828)--(0,0)
  (0,0)--(3,0)--(1,2)--(0,2)--cycle
	(1.5,0) node[below] {P.D1.d}
	;
	}}
	;
\draw
  (14,-16.45) node[above right] {\tikz[xscale=.4, yscale=.4]
	{
\draw[thick]
  (0,0)--(0,-2.828)--(2.828,0)
  (0,0)--(3,0)--(1,2)--(0,2)--cycle
	(1.4,-2.5) node[below] {P.D1.e}
	;
	}}
	;
\draw
  (16.2,-16) node[above right] {\tikz[xscale=.4, yscale=.4]
	{
\draw[thick]
  (0,0)--(-2.828,0)--(0,2.828)--(0,2)
  (0,0)--(3,0)--(1,2)--(0,2)--cycle
	(1.5,0) node[below] {P.D1.f}
	;
	}}
	;
\draw
  (22.4,-17.1) node[above right] {\tikz[xscale=.4, yscale=.4]
	{
\draw[thick]
  (0,2)--(-.828,2)--(.414,3.414)--(2,2)
  (0,0)--(4,0)--(2,2)--(0,2)--cycle
	(2,0) node[below] {Q.B.a}
	;
	}}
	;
\draw
  (0,-22) node[above right] {\tikz[xscale=.4, yscale=.4]
	{
\draw[thick]
  (2,2)--(4.828,2)--(3.414,.586)
  (0,0)--(4,0)--(2,2)--(0,2)--cycle
	(2,0) node[below] {Q.B.b}
	;
	}}
	;
\draw
  (5.8,-22) node[above right] {\tikz[xscale=.4, yscale=.4]
	{
\draw[thick]
  (1,3)--(2.414,3)--(1.707,2.293)
  (0,0)--(4,0)--(1,3)--(0,3)--cycle
	(2,0) node[below] {R.C.a}
	;
	}}
	;
\draw
  (10.4,-22) node[above right] {\tikz[xscale=.4, yscale=.4]
	{
\draw[thick]
  (0,3)--(-.414,3)--(.293,3.707)--(1,3)
  (0,0)--(4,0)--(1,3)--(0,3)--cycle
	(2,0) node[below] {R.C.b}
	;
	}}
	;
\draw
  (15.3,-22.1) node[above right] {\tikz[xscale=.4, yscale=.4]
	{
\draw[thick]
  (0,1)--(-2.121,-1.121)--(0,-3.242)--(0,0)
  (0,0)--(3,0)--(1,2)--(0,1)--cycle
	(1.5,0) node[below] {T.A.a}
	;
	}}
	;
\draw
  (21.3,-23) node[above right] {\tikz[xscale=.4, yscale=.4]
	{
\draw[thick]
  (0,1)--(0,5.242)--(2.121,3.121)--(1,2)
	(0,0)--(3,0)--(1,2)--(0,1)--cycle
	(1.5,0) node[below] {T.A.b}
	;
	}}
	;
\draw
  (0,-27.8) node[above right] {\tikz[xscale=.4, yscale=.4]
	{
\draw[thick]
  (0,1)--(-1.121,-.121)--(-1.121,4.121)--(1,2)
  (0,0)--(3,0)--(1,2)--(0,1)--cycle
	(1.5,0) node[below] {T.A.c}
	;
	}}
	;
\draw
  (5,-27.8) node[above right] {\tikz[xscale=.4, yscale=.4]
	{
\draw[thick]
  (1,2)--(3.121,4.121)--(3.121,-.121)--(3,0)
  (0,0)--(3,0)--(1,2)--(0,1)--cycle
	(1.5,0) node[below] {T.A.d}
	;
	}}
	;
\draw
  (9.1,-27.8) node[above right] {\tikz[xscale=.4, yscale=.4]
	{
\draw[thick]
  (0,1)--(0,3.828)--(2.828,1)
  (0,0)--(4,0)--(4,1)--(0,1)--cycle
	(2,0) node[below] {V.D1.a}
	;
	}}
	;
\draw
  (14.1,-27.8) node[above right] {\tikz[xscale=.4, yscale=.4]
	{
\draw[thick]
  (0,0)--(-2.828,0)--(0,2.828)--(0,1)
  (0,0)--(4,0)--(4,1)--(0,1)--cycle
	(2,0) node[below] {V.D1.b}
	;
	}}
	;
\draw
  (21.7,-27.8) node[above right] {\tikz[xscale=.4, yscale=.4]
	{
\draw[thick]
  (2,2)--(2,2.828)--(4.828,0)--(2,0)
  (0,0)--(2,0)--(2,2)--(0,2)--cycle
	(1,0) node[below] {W.D2}
	;
	}}
	;
\end{tikzpicture}
\end{center}
\caption{All $31$ non-convex pentagonal non-lattice tangrams.
\label{fig:nonlattice}}
\end{figure}

\begin{proposition}
There exist, up to isometry, exactly $31$ non-convex simple pentagons that are non-lattice tangrams.
\end{proposition}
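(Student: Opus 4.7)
The plan is to apply Lemma~\ref{lem}(II) to reduce the problem to an enumeration of glued pairs $(T_1,T_2)$ of convex generalized lattice tangrams. Since their associated lattices differ by a $\pi/4$ rotation, their intersection contains at most one point, and hence $T_1,T_2$ share exactly one joint vertex $v_0$. Counting vertices of $T$ then yields $|V(T_1)|+|V(T_2)|\in\{6,7\}$, according as $v_0$ is a vertex of $T$ or lies in the relative interior of a side of $T$. Since every convex polygon has at least three vertices, the only possibilities are: $T_1,T_2$ both triangles (and the joint angle at $v_0$ is less than $\pi$, making $v_0$ the non-convex vertex of $T$), or one triangle and one quadrangle (the joint angle equals $\pi$, so $v_0$ is interior to a side and the non-convex vertex of $T$ lies elsewhere).

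The next step is to enumerate candidate shapes using area and lattice constraints. By Lemma~\ref{lem:trivial}(i) the areas sum to $8$, and since $T_2$ is a triangle with area strictly between $0$ and $8$, it must be one of the five shapes A--E of Figure~\ref{fig:parts3}. The complementary area of $T_1$, measured in basic triangles, is then $7,8,12,14$ or $15$. For triangular $T_1$ the only shape of admissible area is D (forcing the pair D--D, area $4$ on each side); for quadrangular $T_1$ I would enumerate, inside $\mathbb{Z}^2$, every convex quadrangle of those areas whose inner-angle multi-set is admissible (i.e.\ one of $3311,3131,3221,3212,2222$ as in the previous subsection). This yields the list F--Z of Figure~\ref{fig:parts4}, at which stage one discards any candidate that is not tileable by some subfamily of the seven tans, since such a shape could never arise as $T_1$.

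The main obstacle, and the real bulk of the work, is a careful filter on the remaining candidate pairs. For each surviving $T_1$ and each triangular $T_2'$ of complementary area I would verify that the full set of seven tans admits a simultaneous dissection of $T_1\cup T_2'$ (not merely independent dissections), since an allocation of tans between $T_1$ and $T_2'$ must exist. This step eliminates the quadrangles H, I, N, O, S, U, Y, Z. Then for each surviving pair I would attach an image $T_2$ of $T_2'$, rotated by an odd multiple of $\pi/4$, along a common segment of matching length $k+l\sqrt{2}$, and test whether the union is a \emph{simple} polygon with exactly one non-convex vertex and exactly five vertices in total. No such gluing exists for X, and for the other survivors the gluings must be catalogued modulo the isometry group of $T_1\cup T_2$ (to avoid double-counting when $T_1$ or $T_2$ has nontrivial symmetry, or when different gluings produce congruent pentagons). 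The expected delicate bookkeeping is the symmetry reduction; executing it pair by pair yields the $31$ pentagons of Figure~\ref{fig:nonlattice}, and their mutual incongruence can then be checked from the list of side lengths and angle sequences.
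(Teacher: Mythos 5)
Your proposal follows essentially the same route as the paper: Lemma~\ref{lem}(II) plus the single-joint-vertex count forces the triangle--triangle or triangle--quadrangle split, the area/angle enumeration produces the candidates of Figures~\ref{fig:parts3} and~\ref{fig:parts4}, the simultaneous-tiling filter removes H, I, N, O, S, U, Y, Z, and the gluing step removes X and yields the $31$ pentagons. The only difference is that you spell out the congruence bookkeeping for the gluings more explicitly than the paper does, which is a refinement rather than a departure.
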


The characterization of all simple pentagonal tangrams is complete.


\section*{Acknowledgments}

Both authors express their gratitude to Dr.\ Carsten M\"uller for sharing his enthusiasm, posing problems and encouraging work on tangrams.


\bibliographystyle{plain}


\end{document}